\theoremstyle{plain}
\newtheorem{theorem}{Theorem}[section]
\newtheorem{lemma}[theorem]{Lemma}
\newtheorem{corollary}[theorem]{Corollary}
\newtheorem{proposition}[theorem]{Proposition}
\theoremstyle{definition}
\theoremstyle{remark}
\newtheorem{remark}[theorem]{Remark}
\newcommand{\cF}{\mathcal{F}}
\newcommand{\cQ}{\mathcal{Q}}
\newcommand{\cP}{\mathcal{P}}
\newcommand{\E}{\mathbb E\,}
\newcommand{\R}{\mathbb{R}}
\newcommand{\N}{\mathbb{N}}
\newcommand{\s}{\mathbb{S}}
\newcommand{\B}{\mathbb{B}}
\renewcommand{\P}{\mathbb{P}}
\newcommand{\Int}{\mathop{\mathrm{int}}\nolimits}
\newcommand{\cl}{\mathop{\mathrm{cl}}\nolimits}
\newcommand{\relint}{\mathop{\mathrm{relint}}\nolimits}
\newcommand{\image}{\mathop{\mathrm{Im}}\nolimits}
\newcommand{\Vol}{\mathop{\mathrm{Vol}}\nolimits}
\newcommand{\conv}{\mathop{\mathrm{conv}}\nolimits}
\newcommand{\pos}{\mathop{\mathrm{pos}}\nolimits}
\newcommand{\lin}{\mathop{\mathrm{lin}}\nolimits}
\newcommand{\aff}{\mathop{\mathrm{aff}}\nolimits}
\newcommand{\dist}{\mathop{\mathrm{dist}}\nolimits}
\newcommand{\argmin}{\mathop{\mathrm{argmin}}\nolimits}
\newcommand{\argmax}{\mathop{\mathrm{argmax}}\nolimits}
\newcommand{\eqdistr}{\stackrel{d}{=}}
\newcommand{\ind}{\mathbbm{1}}
\begin{document}

\author[F.~G\"otze]{Friedrich G\"otze}
\address{Friedrich G\"otze, Faculty of Mathematics, Bielefeld University, P. O. Box 10 01 31, 33501 Bielefeld, Germany}
\email{goetze@math.uni-bielefeld.de}

\author[Z.~Kabluchko]{Zakhar Kabluchko}
\address{Zakhar Kabluchko, Institute of Mathematical Stochastics, Orl\'eans-Ring 10,
48149 M\"unster, Germany}
\email{zakhar.kabluchko@uni-muenster.de}

\author[D.~Zaporozhets]{Dmitry Zaporozhets}
\address{Dmitry Zaporozhets, St.~Petersburg Department of Steklov Institute of Mathematics, Fontanka~27, 191011 St.~Petersburg, Russia
Russia}
\email{zap1979@gmail.com}

\title[Grassmann angles of Gaussian convex hulls] {Grassmann angles  and absorption probabilities of Gaussian convex hulls}
\date{\today}
\keywords{Conic intrinsic volumes, persistence probability, conic Crofton formula, conic Steiner formula, Sudakov's formula, Tsirelson's formula, Grassmann angle, Gaussian image, absorption probability, Gaussian simplex}
\subjclass[2010]{Primary: 52A22, 60D05; Secondary: 52A55, 52B11.}
\thanks{The work of the first and third authors was done with the financial support of the Bielefeld University (Germany) in terms of project SFB 701. The work of the third author is supported by the grant RFBR
16-01-00367 and by the Program of Fundamental
Researches of Russian Academy of Sciences ``Modern Problems of
Fundamental Mathematics''. The second author was supported by the German Research Foundation under Germany's Excellence Strategy  EXC 2044 -- 390685587, Mathematics M\"unster: Dynamics - Geometry - Structure.}

\begin{abstract}
Let $M$ be an arbitrary subset in $\R^n$ with a conic (or positive)
hull $C$. Consider its Gaussian image $AM$, where  $A$ is a $k\times
n$-matrix whose entries are independent standard Gaussian random
variables. We show that the probability that the convex hull of $AM$
contains the origin in its interior coincides with the $k$-th
Grassmann angle of $C$. Also, we prove that the expected Grassmann
angles of $AC$ coincide with the corresponding Grassmann angles of
$C$.  Using the latter result, we show that the expected sum of $j$-th
Grassmann angles at $\ell$-dimensional faces of a Gaussian simplex
equals the analogous angle-sum for the regular simplex of the same
dimension.
\end{abstract}

\maketitle

\section{Introduction}
\subsection{Introduction}\label{subsec:intro}
Consider a random linear operator $A:\R^n\to\R^k$ whose matrix, also denoted by $A$, is given by
\begin{equation}
A:=\left(
\begin{array}{ccc}
N_{11}&\dots &N_{1n}\\
\vdots &\cdots&\vdots\\
N_{k1}&\dots &N_{kn}\\
\end{array}
\right)\in\R^{k\times n},
\end{equation}
where $N_{11},\dots, N_{kn}$ are independent standard Gaussian random variables.  If $M$ is any subset of $\R^n$, then the set
\[
AM:=\{Ax:x\in M\}\subset \R^k
\]
is called the \textit{Gaussian image} (or spectrum) of $M$. Sometimes it is also called the Gaussian projection of $M$ even though $A^2\neq A$ a.s.  On the other hand, let  $W_k$  be a  random $k$-dimensional linear subspace of $\R^n$ distributed uniformly on the Grassmannian of all such subspaces, and let $M|W_k$ denote the orthogonal projection of $M$ on $W_k$, where $k\in \{1,\ldots,n\}$.
In stochastic geometry, there are several results that relate geometric characteristics of $AM$  to the corresponding characteristics of $M|W_k$.
Probably the most elegant example is a theorem of Baryshnikov and Vitale~\cite{baryshnikov_vitale} which states\footnote{In~\cite{baryshnikov_vitale}, the result is stated for $P$ being a so-called $SR$-simplex  or a cube, but the same proof applies to every convex polytope.} that for a  convex polytope $P\subset \R^n$ and for any affine-invariant functional $\varphi$ defined on the set of all polytopes, $\varphi(AP)$ has the same distribution as $\varphi(P|W_k)$.
For example, the joint distributions of the $f$-vectors of $AP$ and $P|W_k$ coincide, namely
\begin{equation}
(f_0 (A P),\ldots, f_k(A P) ) \eqdistr (f_0 (P|W_k),\ldots, f_k (P|W_k)),
\end{equation}
where $f_\ell(P)$ denotes the number of $\ell$-dimensional faces of $P$.
In particular, we have the equality of expectations:
\begin{equation}\label{eq:bar_vit_2}
\E f_\ell (A P) =  \E f_\ell (P|W_k),
\end{equation}
for all $\ell \in \{0,\ldots,k\}$.
Another result of this type, relating the expected volume of $AM$ to the expected volume of $M|W_k$ for convex $M$, was obtained by Sudakov~\cite{vS79} and Tsirelson~\cite{bT85}; see Section~\ref{1239}.  In the present paper,  we shall study Gaussian images of convex cones and, in particular,  compute their expected Grassmann angles. Even though Grassmann angles are not affine-invariant, it will turn out that an analogue of Baryshnikov--Vitale theorem holds: For every convex cone $C\subset \R^n$, the expected Grassmann angles of $AC$ coincide with the expected Grassmann angles of the random orthogonal projection $C|W_k$. But first we shall recall the formulae of Sudakov and Tsirelson in Section~\ref{1239} after collecting the necessary definitions in Section~\ref{1608}. For an extensive account on stochastic, convex and integral geometry we refer to the books~\cite{SW08,SchneiderBook,MR1608265}.

\subsection{Intrinsic volumes}\label{1608}
Let $K$ be a  compact convex subset of $\R^n$. The basic geometric characteristics of $K$ are the \emph{intrinsic volumes} $V_0(K), \ldots, V_n(K)$ which are defined as the coefficients in the Steiner formula
\begin{equation}\label{eq:steiner}
\Vol_n(K+r\B^n)=\sum_{k=0}^n \kappa_{n-k} V_k(K) r^{n-k}, \quad r\geq 0.
\end{equation}
Here, we write $\Vol_k(\cdot)$ for the $k$-dimensional volume (Lebesgue measure), $\B^k$ denotes the $k$-dimensional unit ball, and $\kappa_k:=\Vol_k(\B^k)=\pi^{k/2}/\Gamma(\frac k 2 +1)$ its volume. By definition, 
$\kappa_0=1$.

Recall that $W_k$  is  a  random linear $k$-plane  in $\R^n$ distributed according to the Haar measure. An equivalent way to define the intrinsic volumes is the Kubota formula:
\begin{equation}\label{2110}
V_k(K)= \binom nk\frac{\kappa_n}{\kappa_k \kappa_{n-k}}\E\Vol_k(K|W_k),
\end{equation}
where we recall that $K|W_k$ is the orthogonal projection of $K$ on $W_k$.
It is known that  $V_n(\cdot)$ coincides with the $n$-dimensional volume,  $V_{n-1}(\cdot)$ is half the  surface area, and $V_1(\cdot)$  coincides with the mean width, up to a constant factor.

\begin{remark}\label{645}
The normalization  constants in~\eqref{eq:steiner} and~\eqref{2110} are chosen in such a way that the intrinsic volumes of a set do not depend on the dimension of the ambient space: if we embed $K$ into $\R^N$ with $N\geq n$, the intrinsic volumes remain the same.
\end{remark}


\subsection{Gaussian representation of intrinsic volumes}\label{1239}
Using the rotation invariance property of the Gaussian distribution, Sudakov (for $k=1$) and Tsirelson (for all $k$) found the  Gaussian analogue of the Kubota formula. They also generalized it to the infinite-dimensional Hilbert space. In the present paper, we do not need this level of generalization. So, for simplicity, we state their results in the finite-dimensional case only.

Let ${N_1},\dots,{N_n}\in\R^1$ be independent standard Gaussian variables. In~\cite{vS79}, Sudakov showed that for any  compact set $M\subset\R^n$,
\begin{equation}\label{2041}
V_1(\conv M)=\sqrt{2\pi}\,\E\sup_{(t_1,\dots,t_n)\in M}\sum_{i=1}^{n}t_i{N_i},
\end{equation}
where $\conv M$ denotes the convex hull of $M$.
Tsirelson~\cite{bT85} generalized this result to all intrinsic volumes as follows:
\begin{equation}\label{2042}
V_k( \conv M )= \frac{(2\pi)^{k/2}}{k!\kappa_k} \E\,\Vol_k(\conv AM),
\end{equation}
for all $k=0,1,\dots,n$.
In view of~\eqref{2110}, this formula establishes a relation between the expected volume of the uniform projection and the expected volume of the Gaussian projection of $\conv M$. A simple and elegant proof of~\eqref{2042} can be found in~\cite{vitale08}.
To see that~\eqref{2042} generalizes~\eqref{2041}, note that in the case when $k=1$ we have
\begin{align*}
\conv A M
&=
\conv\Big\{\sum_{i=1}^{n}t_i{N_{i}}:(t_1,\dots,t_n)\in M\Big\}\\
&=
\Big[\min_{(t_1,\dots,t_n)\in M}\sum_{i=1}^{n}t_i{N_{i}},\max_{(t_1,\dots,t_n)\in M}\sum_{i=1}^{n}t_i{N_{i}}\Big],
\end{align*}
where we write $N_i:=N_{1i}$.
Hence, for $k=1$, \eqref{2042} reduces to
$$
V_1(\conv M )=\sqrt{\frac\pi 2}\, \E \Big[\max_{(t_1,\dots,t_n)\in M}\sum_{i=1}^{n}t_i{N_i}-\min_{(t_1,\dots,t_n)\in M}\sum_{i=1}^{n}t_i{N_i}\Big].
$$
However, since  $(\sum_{i=1}^{n}t_i{N_i})$ and $(-\sum_{i=1}^{n}t_i{N_i})$, considered as Gaussian processes indexed by $M$,  have the same distribution, it holds that
\[
\E\min_{(t_1,\dots,t_n)\in M}\sum_{i=1}^{n}t_i{N_i} = -\E\max_{(t_1,\dots,t_n)\in M}\sum_{i=1}^{n}t_i{N_i}
\]
and we recover~\eqref{2041}.

The basic example of~\eqref{2042} is when $M$ is the set of the standard orthonormal  basis vectors in $\R^n$. Then $\conv M$ is the regular $n-1$-dimensional simplex and $AM$ is a set of $n$ independent standard Gaussian vectors in $\R^k$. Thus, \eqref{2042} relates the expected volume of the Gaussian polytope (defined as the convex hull of $AM$) with the intrinsic volumes of the regular simplex, see~\cite{KZ19} for  details and other examples.

In~\cite{bT85}, Tsirelson also obtained a probabilistic counterpart of the Steiner formula. In the finite-dimensional case it reads as follows: for any \emph{convex} compact set $K\subset\R^n$,
\begin{align}\label{1021}
\E\,\exp\left(\max_{(t_1,\dots,t_n)\in K}\left[r\sum_{i=1}^{n}t_i{N_i}-\frac{r^2}{2}\sum_{i=1}^{n}t_i^2\right]\right)=\sum_{k=0}^\infty\left(\frac{r}{\sqrt{2\pi}}\right)^kV_k(K).
\end{align}

Further results related to these formulae can be found in the papers of Vitale~\cite{vitale01,vitale08,vitale10} and Chevet~\cite{chevet}.
As was mentioned above, Sudakov and Tsirelson  also obtained the infinite-dimensional versions of~\eqref{2041} and~\eqref{2042}. The basic example in this case is when the index set is the Wiener spiral, and the formula for the intrinsic volumes of its convex hull (found in~\cite{GV01}) leads to a formula for the expected volume of the convex hull of the multidimensional Brownian motion (which was first derived in~\cite{rE14} using a different approach), see~\cite{KZ16} for  details and other examples.

\vskip 10pt

The aim of the present paper is to establish conic versions of~\eqref{2041},~\eqref{2042}, and ~\eqref{1021}.

\section{Notation}
\subsection{Convex cones}
For a set $M\subset\R^n$ denote by $\lin M$ (respectively, $\aff M)$ its linear (respectively, affine) hull, that is, the minimal linear (respectively, affine) subspace containing $M$; equivalently, the set of all linear (respectively, affine) combinations of elements of $M$. The interior and the closure of $M$ will be denoted by $\Int M$ and $\cl M$, respectively.
We write $\relint M$ for the relative interior of $M$ which is the interior of $M$ relative to its affine hull $\aff M$. To avoid problems with measurability,  all sets are tacitly assumed to be Borel.

A non-empty set $C\subset\R^n$ is called  a \emph{convex cone} or just a \textit{cone} if $\lambda_1 x_1+\lambda_2 x_2\in C$ for all $x_1,x_2\in C$ and all $\lambda_1,\lambda_2\geq0$.
For an arbitrary   set  $M\subset \R^n$ let $\pos M$ denote its  conic (or positive) hull defined as the smallest convex cone containing $M$. Equivalently,
\begin{align}
\pos  M
&=
\Big\{\sum_{i=1}^m\lambda_i x_i :x_1,\dots,x_m\in M, \lambda_1,\dots,\lambda_m\geq 0, m\in\N\Big\} \label{2230a}\\
&=
\{\lambda x :x\in \conv M, \lambda\geq 0\}. \label{2230}
\end{align}
The dimension of a convex cone $C$ is defined as the dimension of its linear hull: $\dim C := \dim \lin C$.

\subsection{Grassmann angles}
The \emph{solid angle} of a convex cone $C\subset\R^n$ is defined as
\begin{equation}\label{933}
\alpha(C):=\P[Z\in C],
\end{equation}
where $Z$  is uniformly distributed on the unit sphere in $\R^n$. The maximal possible value of the solid angle in this normalization is $\alpha(\R^n)=1$. If the dimension of $C$ is strictly less than $n$, the solid angle of $C$ is $0$.  If $C\ne\R^n$, then $\P[Z\in C,-Z\in C]=0$ and since the line passing through $Z$ and $-Z$ is equidistributed with  $W_1$ (defined in Section~\ref{subsec:intro}), we obtain that~\eqref{933} is equivalent to
\begin{equation}\label{1304}
\alpha(C) =\frac12\P[C\cap W_1\ne\{0\}].
\end{equation}
In this form, the definition of the solid angle can be generalized as follows.  Fix some $k=0,\dots,n.$ Following Gr\"unbaum~\cite{bG68}  define (with the inverse index order and slightly different notation) the $k$-dimensional \emph{Grassmann angle} of $C$ as the probability for $C$ being intersected by a random $(n-k)$-plane $W_{n-k}$ non-trivially:
\begin{equation}\label{1138}
\gamma_k(C):=\P[C\cap W_{n-k}\ne\{0\}].
\end{equation}
It easily follows that for any convex cone $C\subset\R^n$ with $C\neq \{0\}$,
\[
1 = \gamma_0(C)\geq \gamma_1(C) \geq \ldots  \geq \gamma_n(C)=0.
\]
In the special case when $k=n-1$ we have
\begin{align}\label{739}
  \alpha(C)=\frac12\gamma_{n-1}(C) + \frac12 \ind[C=\R^n].
\end{align}
\begin{remark}\label{1126}
It was shown in~\cite[Eq.~(2.5)]{bG68}) that, as with the intrinsic volumes, the Grassmann angels do not depend on the dimension of the ambient space: if we embed $C$ in $\R^N$ with $N\geq n$, the Grassmann angles  will be the same. In particular, it is convenient to write
\[
\gamma_N(C):=0\quad\text{for all} \quad N\geq\dim C.
\]
\end{remark}
It follows directly from Remark~\ref{1126} that for  a linear $m$-plane  $L_m\subset\R^n$, $m\in \{1,\ldots,n\}$,
\begin{equation}\label{2256}
\gamma_0(L_m) = \ldots = \gamma_{m-1}(L_m) = 1,\quad \gamma_m(L_m) = \ldots = \gamma_n(L_m) = 0.
\end{equation}
For $C=\{0\}$, we have $\gamma_0(C) = \gamma_1(C)= \ldots = 0$.

If $C$ is not a linear subspace, then an equivalent form of~\eqref{1138} is
\begin{align}\label{1026}
    \gamma_k(C)=\P[(\relint C)\cap W_{n-k}\ne\varnothing],
\end{align}
see Corollary~\ref{1027} below.

The Grassmann angles, up to small variations,  coincide with the  \emph{half-tail functionals} defined in~\cite{ALMT14}.

\subsection{Conic intrinsic volumes}

In the forties of the previous century, a conic (also known as spherical) version of the Steiner formula was developed in~\cite{cA48, gH43, lS76}.
In its modern form~\cite{ALMT14, GNP17, MT14}, the formula expresses the size of an angular expansion of a convex cone $C$ in $\R^n$:
\begin{equation}\label{1130}
\P[\dist^2(Z, C)\leq r]=\sum_{k=0}^n\beta_{k,n}(r)\upsilon_k(C),
\end{equation}
where, as above, $Z$ is a random variable uniformly distributed on the $(n-1)$-dimensional unit sphere $\mathbb{S}^{n-1}$, $\dist(Z,C) = \inf\{|Z-y|: y\in C\}$ is the Euclidean distance from $Z$ to $C$,  and $\beta_{k,n}(\cdot)$ is the distribution function of the Beta distribution with parameters $(n-k)/2$ and $n/2$. Since $\beta_{1,n},\dots,\beta_{n,n}$ are linearly independent functions, \eqref{1130} defines the coefficients $\upsilon_k(C)$ uniquely. We call $\upsilon_0(C),\dots,\upsilon_n(C)$ the \emph{conic intrinsic volumes} of $C$ and put $\upsilon_i(C):=0$ for $i>n$. As with the Eucledian intrinsic volumes, the normalization is chosen in such a way  that they do not depend on the dimension of the ambient space:  if we embed $C$ in $\R^N$ with $N\geq n$, the result will be the same.


The $k$th conic intrinsic volume $\upsilon_k(C)$ corresponds to the $(k-1)$st spherical intrinsic volume $\nu_{k-1}(C\cap\s^{n-1})$ in \cite{GHS01, SW08}.

The conic intrinsic volumes satisfy a version of the Gauss--Bonnet theorem (see~\cite[Theorem~6.5.5]{SW08} or~\cite[p.~28]{ALMT14}):
\begin{align}\label{1125}
\upsilon_0(C) + \upsilon_2(C) +\ldots = \frac 12,
\quad
 \upsilon_1(C) + \upsilon_3(C) +\ldots = \frac 12,
\end{align}
provided $C$ is not a linear subspace.
In particular,
\begin{equation}\label{939}
\upsilon_0(C) + \upsilon_1(C) +\ldots +\upsilon_n(C)=1.
\end{equation}
If $L_j\subset\R^n$ is a linear $j$-plane, then
\[
\upsilon_k(L_j)=
\begin{cases}
1, & \text{ if } k=j,\\
0, & \text{ if } k\ne j.
\end{cases}
\]


The Grassmann angles and the conic intrinsic volumes are connected by the  conic version of the Crofton formula (see, e.g.,~\cite[p.~261]{SW08}):
\begin{equation}\label{1818}
\gamma_k(C)=2(\upsilon_{k+1}(C)+\upsilon_{k+3}(C)+\dots),
\end{equation}
for all $k\in \{0,\ldots,n\}$,  where for linear subspaces the factor $2$ should be removed.

\section{Main results}

\subsection{Absorption probabilities for Gaussian cones}
Our first result relates the Grassmann angles of a positive hull of a set with the \emph{absorption probability} of the convex hull of its Gaussian image. As in Section~\ref{subsec:intro}, let $A: \R^{n} \to \R^k$ denote the standard Gaussian random matrix.
\begin{theorem}\label{2054}
For every $k\in \N$ and for an arbitrary  set $M\subset \R^n$ such that $\pos M$ is not a linear subspace,
\begin{equation}\label{1942}
\gamma_{k}(\pos M )=\P[\pos A  M =\R^k]=\P[0\in \Int\conv A M].
\end{equation}
\end{theorem}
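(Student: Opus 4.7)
The plan is as follows. Set $C := \pos M$, a convex cone that is not a linear subspace by hypothesis. By linearity of $A$, $\pos(AM) = A(\pos M) = A(C)$, so the second probability in \eqref{1942} is $\P[A(C) = \R^k]$, and the equivalence $\{A(C) = \R^k\} = \{0 \in \Int \conv AM\}$ is a standard Hahn--Banach fact: both events are complementary to the existence of some $u \in \R^k \setminus \{0\}$ with $\langle u, v \rangle \leq 0$ for all $v \in AM$. Thus only the first equality in \eqref{1942} requires genuine work.

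To prove $\gamma_k(C) = \P[A(C) = \R^k]$, I would first translate the event $\{A(C) = \R^k\}$ into a statement about $\ker A$. Since $A$ is a.s.\ surjective, $A(C) = \R^k$ is equivalent to $C + \ker A = \R^n$. A short density argument (if $\pi: \R^n \to \R^n/\ker A$ is the quotient map, then $\cl \pi(C) = \R^k$ forces $\pi(C) = \R^k$, because a convex cone in $\R^k$ whose closure is $\R^k$ must itself equal $\R^k$) allows me to replace $C$ by its closure $\bar C := \cl C$. Polar duality for closed convex cones then yields
\[
\bar C + \ker A = \R^n \iff \bar C^\circ \cap (\ker A)^\perp = \{0\},
\]
where $\bar C^\circ$ denotes the polar cone. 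By rotation-invariance of the Gaussian matrix $A$, the subspace $(\ker A)^\perp$ is uniform on the Grassmannian of $k$-dimensional linear subspaces of $\R^n$, so $(\ker A)^\perp \eqdistr W_k$. Applying definition \eqref{1138} of the Grassmann angle gives
\[
\P[A(C) = \R^k] = \P[\bar C^\circ \cap W_k = \{0\}] = 1 - \gamma_{n-k}(\bar C^\circ).
\]

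The last step is the polarity identity $\gamma_k(\bar C) + \gamma_{n-k}(\bar C^\circ) = 1$ for $\bar C$ not a linear subspace. I would derive this by combining the conic Crofton formula \eqref{1818} with the classical duality $\upsilon_j(\bar C^\circ) = \upsilon_{n-j}(\bar C)$ of conic intrinsic volumes: expanding both Grassmann angles as sums of conic intrinsic volumes of $\bar C$ and regrouping by the parity of the index, the sum becomes $2 \sum_{j \not\equiv k \!\!\pmod 2} \upsilon_j(\bar C) = 1$ by the Gauss--Bonnet identity \eqref{1125}. The hypothesis that $C$ is not a linear subspace forces $\bar C$ not to be one either (for if $\bar C$ were a subspace $L$, then $\relint C = \relint \bar C = L$ would force $C = L$), and $\gamma_k(\bar C) = \gamma_k(C)$ then follows from the relint form \eqref{1026}, completing the argument.

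The main technical obstacle is the treatment of non-closed $C = \pos M$: passing from $A(C) = \R^k$ to the closed-cone statement about $\bar C$ requires a small but careful convex-analysis argument, and one must verify that the ``not a linear subspace'' hypothesis is preserved under closure. Apart from this, the proof is a chain of classical facts from conic integral geometry, and the key probabilistic input is the single distributional identity $\ker A \eqdistr W_{n-k}$ coming from the rotation-invariance of the standard Gaussian matrix.
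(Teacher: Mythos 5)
Your argument is correct, but it takes a genuinely different route from the paper's. Both proofs reduce to showing $\gamma_k(C)=\P[AC=\R^k]$ for $C=\pos M$ and both feed on the same probabilistic input $\ker A \eqdistr W_{n-k}$ (equivalently $(\ker A)^\perp\eqdistr W_k$), but the deterministic core differs. The paper rewrites $\gamma_k(C)=\P[(\relint C)\cap \ker A\neq\varnothing]$ via Corollary~\ref{1027} and then invokes its key Lemma~\ref{1146} --- proved by proper separation --- stating that $(\relint C)\cap\ker A\neq\varnothing$ if and only if $AC$ is a linear subspace, finishing with the dimension count of Proposition~\ref{942}. You instead convert $\{AC=\R^k\}$ into $\{C+\ker A=\R^n\}$, pass to $\cl C$ (your density argument for convex cones is sound, as is the observation that $\relint C=\relint\cl C$ preserves both the Grassmann angles and the ``not a subspace'' hypothesis), and then dualize: $\cl C+\ker A=\R^n$ iff $(\cl C)^{\circ}\cap(\ker A)^{\perp}=\{0\}$, which together with the polarity identity $\gamma_k(\cl C)+\gamma_{n-k}((\cl C)^{\circ})=1$ closes the loop. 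That identity is correctly derived from \eqref{1818}, \eqref{1125} and the duality $\upsilon_j(D^{\circ})=\upsilon_{n-j}(D)$, but note that this last duality is a classical fact the paper never states or proves, so your proof imports more machinery from conic integral geometry, whereas the paper's stays at the level of elementary separation; in exchange, the paper's lemma also yields the sharper intermediate statement $\gamma_k(C)=\P[AC=\lin AC]$ valid without any dimension assumption. Two small points to tidy up: your surjectivity step requires $k\le n$, so the (trivial) case $k>n$, where both sides of \eqref{1942} vanish, should be dispatched separately; and when $k=n$ one should check directly that $\ker A=\{0\}$ a.s.\ makes the chain degenerate correctly, which it does.
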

\begin{remark}
If $M=C$ is itself a cone (but not a linear subspace), this formula simplifies to
\begin{equation}\label{1942a}
\gamma_{k}(C) = \P[A   C = \R^k] = \P[0\in \Int A C].
\end{equation}
\end{remark}

The reason  we exclude  linear subspaces is  that they are the only cones containing the origin in their relative interior  (see Proposition~\ref{1134} below), which is crucial for the proof of~\eqref{1942}. In the case when $C=L\subset\R^n$ is a linear subspace it follows from~\eqref{2256} and from Proposition~\ref{942} below that instead of~\eqref{1942} we have
\begin{equation*}
\gamma_{k-1}( L )=\P[ A  L =\R^k].
\end{equation*}

The second  part of~\eqref{1942} readily follows from Corollary~\ref{1100}. The main ingredient of the proof of the first part is the spherical invariance of the standard Gaussian distribution.  Namely, we have  that
\begin{align}\label{1050}
    \ker A  \eqdistr W_{n-k}, \qquad k\in \{1,\ldots,n\},
\end{align}
which together with~\eqref{1138} leads to the crucial relation
\begin{equation}\label{2240}
 \gamma_k(C)=\P[\ker A  \cap C\ne\{0\}].
\end{equation}
Applying Lemma~\ref{1146} below we obtain~\eqref{1942}. The details are postponed to Section~\ref{2248}.

The next corollary, which can be considered as a conic version  of Sudakov's formula~\eqref{2041}, connects the $0$th conic intrinsic volume with the \emph{persistence probability} of the isonormal Gaussian process.
\begin{corollary}\label{2048}
For an arbitrary set $M\subset \R^n$ such that $\pos M$ is not a linear subspace,
\begin{equation}\label{947}
\upsilon_0(\pos M )=\P\Big[\inf_{(t_1,\dots,t_n)\in M}\sum_{i=1}^{n}t_i{N_i}\geq0\Big].
\end{equation}
\end{corollary}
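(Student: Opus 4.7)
The plan is to derive the corollary by combining Theorem~\ref{2054} with $k=1$, the conic Crofton formula~\eqref{1818}, and the Gauss--Bonnet identity~\eqref{1125}, and finally invoke the central symmetry of the standard Gaussian vector. So first I would express $\upsilon_0(\pos M)$ in terms of $\gamma_1(\pos M)$: from~\eqref{1818} with $k=1$ we have $\gamma_1(C)=2(\upsilon_2(C)+\upsilon_4(C)+\cdots)$, while~\eqref{1125} (valid since $C:=\pos M$ is not a linear subspace) gives $\upsilon_0(C)+\upsilon_2(C)+\cdots=\tfrac12$. Subtracting yields the clean identity
\[
\upsilon_0(\pos M)=\frac{1-\gamma_1(\pos M)}{2}.
\]

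Next I would interpret $\gamma_1(\pos M)$ through Theorem~\ref{2054} with $k=1$. In that case $A$ is the row vector $(N_1,\dots,N_n)$ and $AM\subset\R$, so $\conv AM$ is an interval (possibly degenerate). Writing
\[
S:=\inf_{(t_1,\dots,t_n)\in M}\sum_{i=1}^n t_i N_i,\qquad T:=\sup_{(t_1,\dots,t_n)\in M}\sum_{i=1}^n t_i N_i,
\]
the condition $0\in\Int\conv AM$ is $\{S<0<T\}$, so~\eqref{1942} reads $\gamma_1(\pos M)=\P[S<0<T]$, and hence
\[
1-\gamma_1(\pos M)=\P[S\geq 0]+\P[T\leq 0]-\P[S\geq 0,\,T\leq 0].
\]

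Now I would use the symmetry $(N_1,\dots,N_n)\eqdistr -(N_1,\dots,N_n)$, which implies $S\eqdistr -T$ and therefore $\P[S\geq 0]=\P[T\leq 0]$. The joint event $\{S\geq 0,\,T\leq 0\}$ forces $\sum t_i N_i=0$ for every $t\in M$; since $\pos M$ is not a linear subspace, $M$ contains some nonzero vector $t^{*}$, and $\P[\sum t_i^{*} N_i=0]=0$, so this joint event is null. Combining, $1-\gamma_1(\pos M)=2\,\P[S\geq 0]$, and plugging into the first display yields $\upsilon_0(\pos M)=\P[S\geq 0]$, which is precisely~\eqref{947}.

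The only subtle point is the null-set argument that removes the overlap $\{S\geq 0,\,T\leq 0\}$; everything else is a direct bookkeeping exercise once Theorem~\ref{2054} and the Crofton/Gauss--Bonnet identities are in hand. A degenerate case to flag is $M=\emptyset$, where $\pos M=\{0\}$ is a subspace and is therefore explicitly excluded by hypothesis.
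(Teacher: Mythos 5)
Your proposal is correct and follows essentially the same route as the paper: reduce $\upsilon_0(\pos M)$ to $\tfrac12(1-\gamma_1(\pos M))$ via the Gauss--Bonnet and conic Crofton identities, apply Theorem~\ref{2054} with $k=1$, and use the symmetry $N\eqdistr -N$ to split the complementary event into two equiprobable halves. The only (harmless) difference is cosmetic: you phrase the $k=1$ case via $0\in\Int\conv AM$ and make explicit the null-overlap event $\{S\geq 0,\,T\leq 0\}$, whereas the paper phrases it via $\pos AM\ne\R^1$ and leaves that negligible overlap implicit.
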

\begin{proof}
Let $C= \pos M$. By~\eqref{1125} and~\eqref{1818} we have
$$
\upsilon_0(C) = \frac 12 -\upsilon_2(C) -  \upsilon_4(C)-\ldots = \frac 12(1- \gamma_1(C)).
$$
Taking $k=1$ in Theorem~\ref{2054}, we obtain
$$
\upsilon_0(C) = \frac 12 (1 - \P[\pos A M =\R^1]) = \P[A M \subset [0,\infty)]
$$
because the conic hull of $A M$ is not $\R^1$ if and only if  $A M$ is either completely contained in $[0,\infty)$ or in $(-\infty,0]$, the probabilities of both possibilities being equal. Recalling that $A$ is the matrix $(N_1,\ldots,N_n)$ completes the proof.
\end{proof}

\begin{remark}
Another way to see~\eqref{947} is to use the fact that $\upsilon_0(C) = \upsilon_n(C^{\circ}) = \alpha(C^{\circ})$, where $C^{\circ} = \{x\in \R^n:\langle x, y\rangle \leq 0 \text{ for all }y\in C\}$ is the polar cone of $C$. The angle of $C^\circ$ is the probability that $\langle N, y\rangle\leq 0$ for all $y\in C$, where $N$ is a random vector in $\R^n$ with any rotationally invariant distribution. For example, we may take $N=(-N_1,\ldots,-N_n)$ to be standard Gaussian, leading to~\eqref{947}.
\end{remark}

\subsection{Grassmann angles of Gaussian cones}
In the next theorem, we compute the expected Grassmann angles of the Gaussian image of a convex cone. We recall that $A:\R^n\to\R^k$ denotes a standard Gaussian matrix, where $k\in\N$.
\begin{theorem}\label{655}
Let $C\subset\R^n$ be a convex cone.    Then, the dimension of $AC$ is $m := \min(\dim C, k)$ with probability $1$, and for all  $j \in \{0,\ldots,m-1\}$ we have
\begin{equation*}
\E [\gamma_j(A  C)]
=
\gamma_j(C).
\end{equation*}
\end{theorem}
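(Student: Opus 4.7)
The plan is to reduce to Theorem~\ref{2054} by introducing an auxiliary standard Gaussian $j\times k$ matrix $A'$ independent of $A$. The dimension assertion is immediate, since $\lin(AC)=A(\lin C)$ and $A$ restricted to the linear subspace $\lin C\subset\R^n$ is almost surely of rank $\min(\dim C,k)=m$. If $C$ is itself a linear subspace, then so is $AC$ almost surely, of dimension $m$, and for $j<m$ both sides of the claimed identity equal~$1$ by~\eqref{2256}; so I assume henceforth that $C$ is not a linear subspace.

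The first key step is to show that, conditionally on $A$,
\[
\gamma_j(AC)=\P[A'(AC)=\R^j\mid A]\qquad\text{almost surely.}
\]
On the event that $AC$ is not a linear subspace in $\R^k$, this is exactly Theorem~\ref{2054} applied to the cone $AC$ with matrix $A'$. On the complementary event, $AC$ is a linear subspace of dimension $m>j$, and both sides equal $1$: the left by~\eqref{2256}, the right because $A'$ restricted to any subspace of dimension $>j$ is almost surely surjective onto $\R^j$. Taking expectations gives $\E\gamma_j(AC)=\P[A'AC=\R^j]$.

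The second key step is to identify the distribution of the composite map $T:=A'A:\R^n\to\R^j$. For any $O\in O(n)$ the matrix $AO$ has i.i.d.\ $N(0,I_k)$ columns, hence $AO\eqdistr A$ and $TO\eqdistr T$; so $\ker T$ is an $O(n)$-invariant random subspace of $\R^n$. A short rank computation using $j<m\le\min(n,k)$ shows that $\rank T=j$ almost surely, so $\dim\ker T=n-j$ almost surely, and therefore $\ker T\eqdistr W_{n-j}$. The deterministic equivalence $\{TC=\R^j\}=\{\ker T\cap C\ne\{0\}\}$ underlying the proof of Theorem~\ref{2054} (Lemma~\ref{1146}) then yields
\[
\P[A'AC=\R^j]=\P[\ker T\cap C\ne\{0\}]=\P[W_{n-j}\cap C\ne\{0\}]=\gamma_j(C),
\]
where the final equality is the defining relation~\eqref{1138}.

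The main point requiring care is the last step: Lemma~\ref{1146} is invoked in the paper with a standard Gaussian matrix, but here I apply it to the non-Gaussian composite $T=A'A$. Since Lemma~\ref{1146} should be a deterministic statement depending only on $T$ having rank $j$ and on $C$ not being a linear subspace --- both of which hold almost surely in our setting --- this should go through, but verifying it is the substantive point beyond the distributional identification $\ker(A'A)\eqdistr W_{n-j}$.
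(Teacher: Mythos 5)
Your argument is correct in substance, and it packages the proof differently from the paper, so let me compare. The paper's proof writes $\E[\gamma_j(AC)]=\P[U_{k-j}\cap\relint(AC)\neq\varnothing]$ via Corollary~\ref{1027}, replaces the uniform subspace $U_{k-j}\subset\R^k$ by a deterministic $L_{k-j}$ using rotational invariance of $AC$, and then does the technical heart of the work by hand: showing that, up to null events, $L_{k-j}$ meets $\relint(AC)$ iff $A^{-1}L_{k-j}$ meets $\Int C$, and that $A^{-1}L_{k-j}\eqdistr W_{n-j}$. You instead iterate the absorption-probability theorem: Theorem~\ref{2054} applied to the cone $AC$ with an auxiliary Gaussian $A'$ converts $\gamma_j(AC)$ into $\P[A'AC=\R^j\mid A]$, and then the composite $T=A'A$ is analyzed with the same tools (Lemma~\ref{1146}, Corollary~\ref{1027}, a rank count) that prove Theorem~\ref{2054} itself. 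The two routes are secretly close --- your $\ker T=A^{-1}(\ker A')$ is exactly the paper's $A^{-1}L_{k-j}$ with $L_{k-j}$ randomized --- but yours buys a cleaner high-level structure (the interior/relative-interior bookkeeping is hidden inside Theorem~\ref{2054} and Lemma~\ref{1146}), at the price of having to re-verify the supporting almost-sure statements for the non-Gaussian map $T$. Since Lemma~\ref{1146} is a purely deterministic statement about an arbitrary matrix, the point you flag as substantive does go through.

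Three loose ends to tighten. First, the case $j=0$ should be dispatched separately (both sides equal $1$ since $C\neq\{0\}$ and $AC\neq\{0\}$ a.s.); Theorem~\ref{2054} is stated for $k\in\N$. Second, the ``deterministic equivalence'' $\{TC=\R^j\}=\{\ker T\cap C\neq\{0\}\}$ is not literally deterministic: Lemma~\ref{1146} gives $\{TC=\lin TC\}=\{\ker T\cap\relint C\neq\varnothing\}$, and you must upgrade this to your statement using (i) the almost-sure fact that $\lin(TC)=T(\lin C)=\R^j$, and (ii) the null-set statement of Corollary~\ref{1027} applied to $\ker T\eqdistr W_{n-j}$, which lets you pass from $\relint C$ to $C\setminus\{0\}$. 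Third, for (i) you need $\rank(T|_{\lin C})=j$ almost surely, not merely $\rank T=j$; this follows by conditioning on $A$ (so that $A\lin C$ is a fixed $m$-dimensional subspace, $m>j$, onto which $A'$ restricts as a full-rank Gaussian map), in the spirit of Proposition~\ref{942}. With these routine additions your proof is complete.
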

The proof is postponed to Section~\ref{2248}. After the first version of this paper has been uploaded to the arXiv, it has been pointed to us by Martin Lotz that Theorem~\ref{655} follows from Lemma IV.9 in~\cite{amelunxen_lotz_walvin} (which was communicated to the authors of~\cite{amelunxen_lotz_walvin} by M.\ McCoy) by combining it with the polar decomposition of the Gaussian matrix $A$.

Let us state several corollaries of Theorems~\ref{2054} and~\ref{655}.
\begin{corollary}\label{1151}
Let $C\subset \R^n$ be a convex cone which is  not a linear subspace. Then, for all $k\in \{1,\ldots,\dim C\}$ and $j \in \{0,\ldots,k-1\}$,
\begin{equation}\label{eq:E_gamma_j_AC_ind}
\E\left[\gamma_j(A  C)\ind[A  C\ne \R^k]\right]
=
\gamma_j(C) - \gamma_k(C).
\end{equation}
If, additionally, $\gamma_k(C)\neq 1$, then
\begin{equation}\label{eq:E_gamma_j_AC_cond}
\E[\gamma_j(A  C)\,|\,AC\ne \R^k]= \frac{\gamma_j(C)-\gamma_k(C)}{1-\gamma_k(C)}.
\end{equation}
\end{corollary}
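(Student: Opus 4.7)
The plan is to obtain both identities by decomposing the expectation from Theorem~\ref{655} according to whether the Gaussian image $AC$ fills $\R^k$ or not, and using Theorem~\ref{2054} to evaluate the probability of the first event.

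First I would note that, since $k \leq \dim C$, we have $m = \min(\dim C, k) = k$ in the notation of Theorem~\ref{655}. Hence that theorem applies to every $j \in \{0,\ldots,k-1\}$ and gives $\E[\gamma_j(AC)] = \gamma_j(C)$. Next I would split
\begin{equation*}
\E[\gamma_j(AC)] = \E[\gamma_j(AC)\ind[AC = \R^k]] + \E[\gamma_j(AC)\ind[AC \ne \R^k]].
\end{equation*}
On the event $\{AC = \R^k\}$, the cone $AC$ is the full linear subspace $\R^k$, so by~\eqref{2256} we have $\gamma_j(AC) = 1$ for every $j \in \{0,\ldots,k-1\}$. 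Therefore the first term equals $\P[AC = \R^k]$, and Theorem~\ref{2054} (applicable because $C$ is not a linear subspace) identifies this with $\gamma_k(C)$. Rearranging yields~\eqref{eq:E_gamma_j_AC_ind}.

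For~\eqref{eq:E_gamma_j_AC_cond}, I would simply divide~\eqref{eq:E_gamma_j_AC_ind} by $\P[AC \ne \R^k] = 1 - \gamma_k(C)$, which is strictly positive by the hypothesis $\gamma_k(C) \ne 1$.

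I do not anticipate a real obstacle here; the one point to verify carefully is that the dimension constraint $j \leq m-1$ in Theorem~\ref{655} matches the range $j \leq k-1$ in the corollary, which is precisely why the assumption $k \leq \dim C$ is needed. The other mild subtlety is that on $\{AC \ne \R^k\}$ the cone $AC$ is almost surely $k$-dimensional but not a linear subspace, so its Grassmann angles are genuine random variables in $(0,1)$; however, this fact is not needed for the proof, only the value of $\gamma_j$ on the complementary event.
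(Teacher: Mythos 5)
Your proposal is correct and follows essentially the same route as the paper: decompose $\E[\gamma_j(AC)]$ over the events $\{AC=\R^k\}$ and $\{AC\ne\R^k\}$, use $\gamma_j(AC)=1$ for $j<k$ on the first event together with Theorem~\ref{2054} to identify its contribution as $\gamma_k(C)$, apply Theorem~\ref{655} for the total expectation, and divide by $1-\gamma_k(C)$ for the conditional version. No gaps.
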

\begin{proof}
By Theorem~\ref{655} we have
\begin{align*}
\E\left[\gamma_j(A  C)\ind[A  C \ne \R^k]\right]
&=
\E\left[\gamma_j(A  C)\right] - \E\left[\gamma_j(A  C)\ind[A  C = \R^k]\right]\\
&=
\gamma_j(C) - \P[A  C = \R^k]
\end{align*}
since on the event $A  C = \R^k$, we have $\gamma_j(A  C) = 1$ for all $j<k$. To prove~\eqref{eq:E_gamma_j_AC_ind}, recall that $\P[A  C = \R^k] = \gamma_k(C)$ by Theorem~\ref{2054}. To prove~\eqref{eq:E_gamma_j_AC_cond}, use~\eqref{eq:E_gamma_j_AC_ind} and the formula  $\P[A  C \neq  \R^k] = 1-\gamma_k(C)$. To avoid division by $0$, we have to exclude the case $\gamma_k(C) = 1$ (which occurs if $C$ is a half-space).
\end{proof}

The following corollary  gives a formula for the expected solid angle of $AC$.
\begin{corollary}\label{cor:E_alpha C}
Let $C\subset \R^n$ be a convex cone which is not a linear subspace.  For all $k\in \{1,\ldots,\dim C\}$ we have
\begin{align*}
\E [\alpha(AC)]
&=
\frac{\gamma_k(C) + \gamma_{k-1}(C)}2 \\
&=
\upsilon_{k}(C)+\upsilon_{k+1}(C)+ \upsilon_{k+2}(C)+\dots,\\
\E [\alpha(AC) \ind[A  C\neq \R^k]]
&=
\frac{\gamma_{k-1}(C) - \gamma_{k}(C)}2 \\
&=
\upsilon_{k}(C) - \upsilon_{k+1}(C) + \upsilon_{k+2}(C) -\dots.
\end{align*}
\end{corollary}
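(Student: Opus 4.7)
The plan is to reduce the computation of $\E[\alpha(AC)]$ to quantities already handled by Theorems~\ref{2054} and~\ref{655} (together with Corollary~\ref{1151}) by applying identity~\eqref{739} to the random cone $AC\subset\R^k$. Namely, since~\eqref{739} holds for \emph{any} convex cone in any ambient Euclidean space (whether or not it is a linear subspace, whether or not it equals the full space), we may pointwise write
\begin{equation*}
\alpha(AC)=\tfrac12\,\gamma_{k-1}(AC)+\tfrac12\,\ind[AC=\R^k].
\end{equation*}
Taking expectation on both sides then reduces the corollary to evaluating $\E[\gamma_{k-1}(AC)]$ and $\P[AC=\R^k]$ separately.

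For the first formula I would note that our range assumption $k\in\{1,\ldots,\dim C\}$ forces $m:=\min(\dim C,k)=k$, so $j:=k-1\in\{0,\ldots,m-1\}$ and Theorem~\ref{655} applies directly to give $\E[\gamma_{k-1}(AC)]=\gamma_{k-1}(C)$. Since $C$ is not a linear subspace by hypothesis, Theorem~\ref{2054} yields $\P[AC=\R^k]=\gamma_k(C)$. Substituting both into the expectation of the pointwise identity gives $\E[\alpha(AC)]=\tfrac12(\gamma_{k-1}(C)+\gamma_k(C))$. For the indicator version, the term $\ind[AC=\R^k]\cdot\ind[AC\ne\R^k]$ vanishes, so only the first summand survives, and I would invoke Corollary~\ref{1151} (with $j=k-1$) to get $\E[\gamma_{k-1}(AC)\ind[AC\ne\R^k]]=\gamma_{k-1}(C)-\gamma_k(C)$, producing $\tfrac12(\gamma_{k-1}(C)-\gamma_k(C))$.

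To pass to the conic-intrinsic-volume form, I would expand both $\gamma_{k-1}(C)$ and $\gamma_k(C)$ using the conic Crofton formula~\eqref{1818}:
\begin{equation*}
\gamma_{k-1}(C)=2\bigl(\upsilon_{k}(C)+\upsilon_{k+2}(C)+\cdots\bigr),\qquad \gamma_k(C)=2\bigl(\upsilon_{k+1}(C)+\upsilon_{k+3}(C)+\cdots\bigr).
\end{equation*}
Half the sum of these two series telescopes into $\upsilon_k(C)+\upsilon_{k+1}(C)+\upsilon_{k+2}(C)+\cdots$, and half the difference yields the alternating version $\upsilon_k(C)-\upsilon_{k+1}(C)+\upsilon_{k+2}(C)-\cdots$, which is exactly what the corollary claims.

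There is no real obstacle here; the proof is an assembly of already-proved ingredients. The only point deserving care is verifying that the hypotheses of each invoked result are met: $C$ is not a linear subspace (so Theorem~\ref{2054} and Corollary~\ref{1151} apply), and $k\le\dim C$ guarantees $m=k$ so that Theorem~\ref{655} can be applied with the index $j=k-1$. Identity~\eqref{739} applied to $AC$ covers cleanly the three possibilities (proper subcone of $\R^k$, proper linear subspace of $\R^k$, all of $\R^k$), because in the subspace case both $\alpha(AC)$ and $\gamma_{k-1}(AC)$ vanish and in the $AC=\R^k$ case both sides equal $1$.
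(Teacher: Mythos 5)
Your proposal is correct and follows essentially the same route as the paper's own proof: apply identity~\eqref{739} to the random cone $AC$, take expectations, and invoke Theorem~\ref{655}, Theorem~\ref{2054}, and Corollary~\ref{1151} with $j=k-1$, then pass to conic intrinsic volumes via the Crofton formula~\eqref{1818}. The only difference is cosmetic — you verify the applicability of~\eqref{739} in the degenerate cases a bit more explicitly than the paper does.
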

\begin{proof}
In both cases, the second equality follows from the conic Crofton formula~\eqref{1818}.  To prove the first equalities, observe that by~\eqref{739},
\begin{align*}
\alpha(A  C)=\frac12\gamma_{k-1}(A  C)+\frac12\ind[A  C=\R^k].
\end{align*}
Multiplying by the indicator function, we obtain
$$
\alpha(A  C) \ind[A  C\neq \R^k] = \frac12 \gamma_{k-1}(A  C)\ind[A  C\neq \R^k].
$$
Taking the expectation in the above two equalities, applying Theorem~\ref{655} and Corollary~\ref{1151} with $j=k-1$ and noting that $\P[A C\ne \R^k]=1-\gamma_k(C)$ by Theorem~\ref{2054} yields the required formula.
\end{proof}

\begin{corollary}
Let $C\subset \R^n$ be a convex cone and $k=n$. Then,
$$
\E [\alpha(AC)] = \alpha (C).
$$
\end{corollary}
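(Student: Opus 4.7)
The plan is to handle the corollary by case analysis, since Corollary~\ref{cor:E_alpha C} is only stated under the hypotheses that $C$ is not a linear subspace and $k\le\dim C$. With $k=n$, the key extra feature is that the square Gaussian matrix $A$ is almost surely invertible, so taking Gaussian images preserves both the dimension of $C$ and the property of being (or not being) a linear subspace.

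First I would address the case where $\dim C=n$ and $C$ is not a linear subspace, so in particular $C\neq\R^n$. Here Corollary~\ref{cor:E_alpha C} applies with $k=n$ and gives
\begin{equation*}
\E[\alpha(AC)]=\tfrac{1}{2}\bigl(\gamma_{n-1}(C)+\gamma_n(C)\bigr).
\end{equation*}
The $n$-th Grassmann angle vanishes by the definition~\eqref{1138}, since $W_{n-n}=W_0=\{0\}$ and hence $C\cap W_0=\{0\}$ deterministically. On the other hand, formula~\eqref{739} applied to $C\neq\R^n$ yields $\alpha(C)=\gamma_{n-1}(C)/2$, so $\E[\alpha(AC)]=\alpha(C)$ follows at once.

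The remaining cases are settled directly via the invertibility of $A$. If $C=\R^n$, then $AC=\R^n$ almost surely, so $\alpha(AC)=1=\alpha(C)$ a.s. If $\dim C<n$ (whether or not $C$ is a linear subspace), then $\alpha(C)=0$ by~\eqref{933}, and since $A$ is a.s.\ invertible we have $\dim AC=\dim A(\lin C)=\dim C<n$ almost surely, giving $\alpha(AC)=0$ a.s.

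No step is a real obstacle; the only points requiring care are checking the hypotheses of Corollary~\ref{cor:E_alpha C} in the full-dimensional non-subspace case and keeping track of the indicator term in~\eqref{739}, which is precisely what motivates splitting into the cases above.
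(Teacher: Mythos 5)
Your proposal is correct and follows essentially the same route as the paper: the paper likewise dismisses the cases $\dim C<n$ and $C=\R^n$ as trivial and otherwise applies Corollary~\ref{cor:E_alpha C} with $k=n$ together with $\gamma_n(C)=0$ and $\alpha(C)=\tfrac12\gamma_{n-1}(C)$. Your only addition is to spell out why the trivial cases hold (a.s.\ invertibility of the square Gaussian matrix), which the paper leaves implicit.
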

\begin{proof}
The statement is trivial if $\dim C < n$ or $C=\R^n$. Otherwise, we can apply Corollary~\ref{cor:E_alpha C} with $k=n$ and use that $\gamma_n(C) = 0$ and $\alpha(C) = \frac 12 \gamma_{n-1}(C)$.
\end{proof}

\begin{remark}
In Theorems~\ref{2054} and~\ref{655}, as well as in the corollaries of Theorem~\ref{655}, it is possible to replace the Gaussian matrix $A$ by the orthogonal projection onto a uniformly distributed random linear subspace $W_{k}$, $k\in \{1,\ldots,n\}$. In fact, all proofs apply with minor changes since the essential property of $A$ used there was the fact that $\ker A$ has the same distribution as $W_{n-k}$, which is true for the orthogonal projection on $W_{k}$ as well.
For example, the analogue of the first equation in Corollary~\ref{cor:E_alpha C} is
$$
\E [\alpha(C|W_{k})]
=
\frac{\gamma_k(C) + \gamma_{k-1}(C)}2.
$$
This formula was obtained by Glasauer~\cite{glasauer_phd}; see~\cite[p.~263]{SW08} and also~\cite[Lemma~5.1]{convex_hull_sphere} for the conic version stated here. In particular, we have $\E [\alpha(AC)] = \E [\alpha(C|W_{k})]$, which reminds of the theorem of Baryshnikov and Vitale~\cite{baryshnikov_vitale} stated in the introduction; see~\eqref{eq:bar_vit_2}.
\end{remark}

\subsection{A probabilistic conic Steiner formula}
Our next result establishes a probabilistic version of the conic Steiner formula and a conic version of Tsirelson's formula~\eqref{1021}.
\begin{theorem}\label{1338}
Let $N=(N_1,\ldots,N_n)$ be a standard Gaussian vector in $\R^n$ and let $M\subset\R^n$ be an arbitrary set. Then, for every $r>0$,
$$
\E\exp\left(\frac{1-r^{-2}}{2}\sup_{x\in (\conv M) \backslash \{0\}}\frac{\langle N,x\rangle_+^2}{|x|^2}\right)=\sum_{k=0}^n r^k\upsilon_k(\pos M).
$$
\end{theorem}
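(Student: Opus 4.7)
The plan is to recognize the supremum in the exponent as the squared norm of the metric projection of $N$ onto the closed convex cone $C:=\cl(\pos M)$, and then to evaluate the moment generating function of $\|\Pi_C(N)\|^2$ at $s=(1-r^{-2})/2$. Since both sides of the asserted identity depend on $\pos M$ only through its closure, no generality is lost in taking $C$ closed. The central geometric identity is
\[\sup_{x\in(\conv M)\setminus\{0\}}\frac{\langle N,x\rangle_+^2}{|x|^2}=\|\Pi_C(N)\|^2.\]
Two ingredients prove it. First, the integrand is positively homogeneous of degree zero, so by \eqref{2230} the supremum equals $\sup_{u\in C\cap\s^{n-1}}\langle N,u\rangle_+$. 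Second, the Moreau orthogonal decomposition $N=\Pi_C(N)+\Pi_{C^\circ}(N)$, combined with Cauchy--Schwarz, gives $\langle N,u\rangle\leq\|\Pi_C(N)\|$ for every unit $u\in C$, with equality at $u=\Pi_C(N)/\|\Pi_C(N)\|$ whenever the projection is nonzero (and both sides vanish otherwise).

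With this identification the theorem becomes $\E\exp(s\|\Pi_C(N)\|^2)=\sum_{k=0}^n r^k\upsilon_k(C)$. The cleanest finish is via the \emph{Master Steiner formula} (Amelunxen--Lotz--McCoy--Tropp~\cite{ALMT14}), which asserts that $\|\Pi_C(N)\|^2\eqdistr\chi_J^2$ for an independent random index $J$ with $\P[J=k]=\upsilon_k(C)$. Plugging in the standard chi-square MGF $\E e^{s\chi_k^2}=(1-2s)^{-k/2}=(r^{-2})^{-k/2}=r^k$ yields the required identity at once.

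The main obstacle is supplying the Master Steiner formula, which I would derive from the conic Steiner formula~\eqref{1130} as follows. Decompose $N=RZ$ with $R=|N|$ and $Z=N/|N|$ independent, $R^2\sim\chi_n^2$, and $Z$ uniform on $\s^{n-1}$. Positive homogeneity of $\Pi_C$ together with $|Z|^2=\|\Pi_C(Z)\|^2+\dist^2(Z,C)$ yields $\|\Pi_C(N)\|^2=R^2(1-D^2)$, where $D^2:=\dist^2(Z,C)$. Conditioning on $Z$, invoking the chi-square MGF, and using the algebraic simplification $1-2s(1-D^2)=r^{-2}(1+(r^2-1)D^2)$, one obtains
\[\E\exp(s\|\Pi_C(N)\|^2)=r^n\,\E\bigl(1+(r^2-1)D^2\bigr)^{-n/2}.\]
By \eqref{1130}, $D^2$ is a mixture of $\mathrm{Beta}((n-k)/2,k/2)$ laws with weights $\upsilon_k(C)$ (degenerating to atoms at $1$ and $0$ when $k=0$ and $k=n$). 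For each $k$ the change of variables $u=w/(r^2-(r^2-1)w)$, a bijection of $[0,1]$ onto itself, converts the integrand $(1+(r^2-1)u)^{-n/2}u^{(n-k)/2-1}(1-u)^{k/2-1}$ into $r^{k-2n}$ times an unweighted $\mathrm{Beta}((n-k)/2,k/2)$ density in $w$, the essential cancellation being that the Beta shape parameters sum to $n/2$. Hence the $k$-th integral equals $r^{k-n}$, and summing over $k$ and multiplying by $r^n$ produces $\sum_{k=0}^n r^k\upsilon_k(C)$, completing the proof.
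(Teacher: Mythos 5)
Your proof is correct, and its first half coincides with the paper's argument: both reduce the theorem to the identity $\sup_{x\in(\conv M)\setminus\{0\}}\langle N,x\rangle_+^2/|x|^2=|\Pi_{C}(N)|^2$ with $C=\cl\pos M$, using the degree-zero homogeneity of the integrand together with \eqref{2230}. You obtain the identity from the Moreau decomposition plus Cauchy--Schwarz, while the paper parametrizes $C$ by rays $\lambda u$ and applies the Pythagorean theorem; these are the same computation in different clothes. The genuine divergence is in the second half: the paper simply cites the moment generating function identity \eqref{1247} from \cite[Remark~4.7]{MT14} and stops, whereas you rederive that identity from the conic Steiner formula \eqref{1130} via the polar decomposition $N=RZ$, the conditional $\chi_n^2$ moment generating function, and the substitution $u=w/(r^2-(r^2-1)w)$ in each Beta integral. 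I checked that substitution: the powers of $r^2-(r^2-1)w$ cancel exactly because the two Beta shape parameters sum to $n/2$, so the computation is sound. What your route buys is a proof that is self-contained relative to the paper's own definition of the conic intrinsic volumes (it in effect reproves the McCoy--Tropp ``Master Steiner formula'' in the form needed here), at the cost of an extra page of calculus; what the paper's route buys is brevity.

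Two small points. First, the intermediate factor you announce, $r^{k-2n}$, should be $r^{k-n}$; this is consistent with your own next sentence (``the $k$-th integral equals $r^{k-n}$''), so it is only a slip. Second, your derivation uses the $\mathrm{Beta}((n-k)/2,k/2)$ parametrization in \eqref{1130}, which is the correct one: the second parameter printed in the paper after \eqref{1130} reads $n/2$ but must be $k/2$ for consistency with $\upsilon_k(L_j)=\ind[k=j]$, so your version is the right one to use.
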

In a different form, this relation was first obtained in~\cite[Remark~4.7]{MT14} by McCoy and Tropp. In Section~\ref{2225} we will prove the equivalence of their result and Theorem~\ref{1338}.
\vskip 10pt

In the next section, we give several applications of Theorems~\ref{2054} and~\ref{655}.

\section{Examples}
\subsection{Example I: Absorption probability for Gaussian polytopes}
Let $M$ be the standard orthonormal basis in $\R^n$:
\begin{align*}
    M=\{e_1,\dots,e_n\}.
\end{align*}
Then $\conv M$ is the regular $(n-1)$-dimensional simplex and $\pos M$ is the non-negative orthant $[0,\infty)^n$.

Since $X_1 := A e_1,\dots, X_n := A e_n$ are independent standard Gaussian vectors in $\R^k$, we obtain that
\begin{align*}
    \cP_n:=\conv A M = \conv (X_1,\ldots,X_n)
\end{align*}
is a \emph{Gaussian polytope} in $\R^k$.
Theorem~\ref{2054} relates the absorption probability for the Gaussian polytope to the Grassmann angles of the non-negative orthant.
The conic intrinsic volumes of the non-negative orthant are well known  (see, e.g.,~\cite{ALMT14}):
\begin{align*}
\upsilon_k(\pos M)=\frac{\binom nk}{2^n}, \quad k=0,\ldots,n.
\end{align*}
It follows from~\eqref{1818} that the Grassmann angles of the non-negative orthant are given by
$$
\gamma_k(\pos M)
=
\frac{1}{2^{n-1}} \left(\binom{n}{k+1} + \binom{n}{k+3}+\ldots \right)
=
\frac{1}{2^{n-1}}\sum_{i=k}^{n-1}\binom{n-1}{i},
$$
for $k\in \{0,\ldots,n-1\}$.
Theorem~\ref{2054} yields the following formula for the probability that the Gaussian polytope absorbs the origin:
\begin{align*}
\P[0 \in \Int \cP_n]
=
\gamma_k(\pos M)
=
\frac{1}{2^{n-1}}\sum_{i=k}^{n-1}\binom{n-1}{i}.
\end{align*}
This recovers a special case of Wendel's formula~\cite{jW62}.

\subsection{Example II: Angles of Gaussian simplices}
Given a polytope $P$, denote by $\cF_\ell(P)$ the set of its $\ell$-dimensional faces, $\ell\in \{0,\ldots,\dim P\}$. The \textit{tangent cone} of $P$ at its face $F\in \cF_\ell(P)$ is defined as
$$
T(F,P) := \{y\in \R^d\colon  \exists \varepsilon>0 \text{ such that } f_0 + \varepsilon y \in P\}
$$
where $f_0$ is any point in $\relint F$. We are interested in the sum of the $j$-th Grassmann angles at all $\ell$-dimensional faces of $P$ denoted by
$$
S_{\ell, j}(P) := \sum_{F\in \cF_\ell(P)} \gamma_j(T(F,P)).
$$
The following theorem states that, on average, the sum of Grassmann angles of a Gaussian simplex equals the sum of Grassmann angles of the regular simplex.
\begin{theorem}
Consider the Gaussian simplex $\cP_n:= \conv(X_1,\ldots,X_n)$, where $X_1,\ldots,X_n$ are independent standard Gaussian random vectors in $\R^k$ and $2\leq n\leq k+1$. Let also $\Delta_n:= \conv (e_1,\ldots,e_n)\subset \R^n$ be a regular simplex with $n$ vertices. Then, for all $\ell\in \{0,\ldots,n-2\}$ and $j\in \{0,\ldots, n-2\}$ we have
$$
\E S_{\ell,j}(\cP_n) =  S_{\ell,j}(\Delta_n).
$$
\end{theorem}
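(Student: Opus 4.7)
My plan is to reduce the problem to Theorem~\ref{655} applied face-by-face. I would first write $X_i = A e_i$ with $A$ the standard $k\times n$ Gaussian matrix, so that $\cP_n = A\Delta_n$, and introduce $f_0 := n^{-1}\sum_{i=1}^n e_i$ together with $L := \aff\Delta_n - f_0 = \{y\in\R^n: \sum_i y_i = 0\}$, a linear subspace of dimension $n-1$. The hypothesis $n\le k+1$ gives $\dim\ker A \in\{0,1\}$, and a routine measure argument shows $\ker A\cap L = \{0\}$ a.s., so that $A$ restricts to an affine isomorphism from $\aff\Delta_n$ onto $\aff\cP_n$. Consequently the face lattices are identified: for each $I\subset[n]$ with $|I|=\ell+1$, the face $F_I^{\cP} := \conv(X_i : i\in I)$ of $\cP_n$ is the image of $F_I^{\Delta} := \conv(e_i : i\in I)$, and has dimension $\ell$.

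The key step will be the covariant transformation of tangent cones: on the same almost sure event,
$$
T(F_I^{\cP}, \cP_n) = A \cdot T(F_I^{\Delta}, \Delta_n).
$$
The inclusion $\supseteq$ is immediate. For $\subseteq$, any $z$ in the left-hand side lies in $\lin(\cP_n - Af_0) = AL$, so $z = Ay$ for a unique $y\in L$; the condition $Af_0 + \varepsilon z \in \cP_n$ forces $A(f_0+\varepsilon y) = A y'$ for some $y' \in \Delta_n$, and since $(f_0+\varepsilon y) - y' \in L\cap\ker A = \{0\}$, we conclude $f_0+\varepsilon y = y'\in\Delta_n$, i.e.\ $y\in T(F_I^{\Delta},\Delta_n)$.

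Setting $C_I := T(F_I^{\Delta}, \Delta_n)$, we observe that $C_I$ contains the lineality space $\lin(F_I^{\Delta}-f_0)$ (of dimension $\ell$) together with the rays $\R_{\ge 0}(e_j - f_0)$ for $j\notin I$, so $C_I$ spans $L$ and $\dim C_I = n-1$. Since $k\ge n-1$, we have $\min(\dim C_I, k) = n-1$, and Theorem~\ref{655} yields $\E[\gamma_j(A C_I)] = \gamma_j(C_I)$ for every $j\in\{0,\ldots,n-2\}$. Summing over $I\subset[n]$ with $|I|=\ell+1$ and using linearity of expectation will then give
$$
\E S_{\ell,j}(\cP_n) = \sum_{|I|=\ell+1} \E[\gamma_j(T(F_I^{\cP}, \cP_n))] = \sum_{|I|=\ell+1} \gamma_j(C_I) = S_{\ell,j}(\Delta_n).
$$
The main obstacle is the tangent-cone identity in the second paragraph; the dimensional hypothesis $n\le k+1$ is exactly what turns $A$ into an affine embedding of $\aff\Delta_n$ into $\R^k$ and so permits the face-by-face reduction to Theorem~\ref{655}.
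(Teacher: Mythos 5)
Your proposal is correct and follows essentially the same route as the paper: identify $X_i=Ae_i$, establish the covariance of tangent cones $T(AF,A\Delta_n)=A\,T(F,\Delta_n)$, and apply Theorem~\ref{655} face by face (the paper reduces to a single face by exchangeability and obtains the covariance immediately from the representation $T(F_\ell,\cP_n)=\pos(X_i-\bar X_{\ell+1}\colon i=1,\ldots,n)$ together with $\pos AM=A\pos M$, whereas you prove it by hand using the a.s.\ injectivity of $A$ on $\aff\Delta_n$). One small slip to repair: in your tangent-cone computation the base point must lie in $\relint F_I^{\cP}$, so you should use the barycenter of the face $F_I$ rather than the barycenter $f_0$ of the whole simplex --- with $Af_0$ as base point the set $\{z:\exists\varepsilon>0,\ Af_0+\varepsilon z\in\cP_n\}$ is the tangent cone at the improper face, i.e.\ all of $\lin(\cP_n-Af_0)$; after substituting the face barycenter the argument goes through unchanged.
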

A special case of this result when $n=k+1$ (the full-dimensional simplex), $\ell = 0$ (angles are considered at vertices),  and $j = k-1$ has been obtained in~\cite{KZ18}.

\begin{proof}
Let $F_{\ell}:=\conv(X_1,\ldots,X_{\ell+1})$ and $G_{\ell}:= \conv(e_1,\ldots,e_{\ell+1})$. By exchangeability, it suffices to show that
\begin{equation}\label{eq:E_gamma_j_gamma_j}
\E \gamma_j(T(F_{\ell},\cP_n))
=
\gamma_j(T(G_{\ell},\Delta_n)).
\end{equation}
The tangent cones of $\cP_n$ and $\Delta_n$ at $F_\ell$ and $G_\ell$ are given by
\begin{align*}
T(F_{\ell},\cP_n)
&=
\pos (X_i -\bar X_{\ell+1}: i=1,\ldots,n),\\
T(G_{\ell},\Delta_n)
&=
\pos (e_i -\bar e_{\ell+1}: i=1,\ldots,n),
\end{align*}
where $\bar X_{\ell+1} = \frac 1 {\ell+1} (X_1+\ldots+X_{\ell+1})$ and $\bar e_{\ell+1} = \frac 1 {\ell+1} (e_1 + \ldots + e_{\ell+1})$. If $A:\R^n\to\R^k$ is a Gaussian random matrix, then we can identify $X_1:= Ae_1,\ldots, X_n:= A e_n$, so that
$$
T(F_{\ell},\cP_n) = A(T(G_{\ell},\Delta_n)).
$$
Applying Theorem~\ref{655}, we obtain~\eqref{eq:E_gamma_j_gamma_j}.
\end{proof}

By essentially the same method, it is possible to compute $\E S_{\ell,j}(\cP_n)$ in the case when $n$ need not satisfy $n \leq k+1$.

\subsection{Example III: Convex hulls of Gaussian random walks}
Let $M$ be the set of partial sums of the standard orthonormal vectors:
\begin{align*}
    M=\{e_1,e_1+e_2,\dots,e_1+\dots+e_n\}\subset \R^n.
\end{align*}
Then,  $\pos M$ is the Weyl chamber of type $B_n$:
\begin{align*}
\pos M = \{(t_1,\ldots,t_n)\in\R^n: t_1\geq t_2\geq\ldots \geq t_n\geq0\}.
\end{align*}
Since $X_1 := A e_1,\dots, X_n := A e_n$ are independent standard Gaussian vectors in $\R^k$, we obtain that
\begin{align*}
    \cQ_n:=\conv A M = \conv(X_1,X_1+X_2,\ldots,X_1+\ldots+X_n)
\end{align*}
is the \emph{convex hull of the Gaussian random walk} in $\R^k$.
The conic intrinsic volumes of the Weyl chamber were first computed in~\cite{KS11}, see also~\cite{KVZ17a}:
\begin{align*}
\upsilon_k(\pos M) = \frac {B(n,k)}{2^n n!}, \quad k=0,\ldots,n,
\end{align*}
where $B(n,k)$ is the $k$th coefficient of the polynomial
$$
(t+1)(t+3)\ldots (t+2n-1) = \sum_{k=0}^n B(n,k) t^k.
$$
Crofton's formula~\eqref{1818} yields the Grassmann angles of the Weyl chamber of type $B_n$:
$$
\gamma_k(\pos M) = \frac{2}{2^n n!} (B(n,k+1) + B(n,k+3) + \ldots),
$$
for $k\in \{0,\ldots,n-1\}$. Theorem~\ref{2054} allows to compute the absorption probability for the Gaussian random walk:
\begin{align*}
\P[0 \in \Int \cQ_n] =  \gamma_k(\pos M) = \frac{2}{2^n n!} (B(n,k+1) + B(n,k+3) + \ldots).
\end{align*}
This recovers a special case of Theorem~1.1 in~\cite{KVZ17a} (where the increments need are not required to be Gaussian).

\vskip 10pt
The rest of the paper is devoted to the proof of Theorems~\ref{2054},~\ref{655}, and~\ref{1338}. We start with the section where we provide some essential facts about convex cones.

\section{Properties of convex cones}\label{1052}
An essential step in proving Theorems~\ref{2054} and~\ref{655} is the following property of  convex hulls.
\begin{lemma}\label{1146}
 Let $k\in \{1,\ldots,n-1\}$. Consider some  convex cone  $C\subset \R^n$ and some (deterministic) matrix $A\in\R^{k\times n}$. The following two conditions are equivalent:
\begin{enumerate}
    \item[(a)] $(\relint C)\cap \ker A\ne\varnothing$;
    \item[(b)] $AC=\lin AC$ (that is, $AC$ is a linear subspace).
\end{enumerate}
\end{lemma}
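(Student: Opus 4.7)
The plan is to establish the two implications separately, using only elementary properties of the relative interior of a convex cone: namely, that $\aff C=\lin C$ when $0\in C$, that $\relint C$ is invariant under multiplication by any positive scalar (since such multiplication is a linear homeomorphism of $\lin C$ sending $C$ to itself), and the standard convexity fact that if $a\in\relint C$ and $b\in C$ then the midpoint $\tfrac12(a+b)$ lies in $\relint C$.

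For the implication (a) $\Rightarrow$ (b), I would take a point $x_0\in(\relint C)\cap\ker A$ and argue that $AC$ is symmetric about the origin, which combined with $AC$ being a convex cone forces it to be a linear subspace. Given an arbitrary $y=Ax\in AC$ with $x\in C$, I note that $-x\in\lin C=\aff C$, so by the definition of relative interior there exists $\varepsilon>0$ with $x_0+\varepsilon(-x)\in C$. Applying $A$ and using $Ax_0=0$ yields $-\varepsilon y\in AC$, and then multiplication by $\varepsilon^{-1}>0$ and the cone property of $AC$ give $-y\in AC$.

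For the converse (b) $\Rightarrow$ (a), I would pick some $x_1\in\relint C$ (which exists because $C$ is a nonempty convex set). Since $AC$ is assumed to be a linear subspace, $-Ax_1\in AC$, so there is some $x_2\in C$ with $Ax_2=-Ax_1$, and therefore $A(x_1+x_2)=0$. It remains to show $x_1+x_2\in\relint C$; this is where the main technical step lies. I would apply the midpoint property above to get $\tfrac12(x_1+x_2)\in\relint C$, and then double using the positive-homogeneity of $\relint C$. This produces the required element of $(\relint C)\cap\ker A$, completing the equivalence. The only point requiring care is confirming the two relative-interior properties for cones, but both are immediate consequences of the homeomorphism observation above, so no serious obstacle arises.
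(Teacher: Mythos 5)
Your argument is correct, and the direction (b) $\Rightarrow$ (a) is genuinely different from the paper's. The paper proves that implication by contraposition via the proper separation theorem: assuming $(\relint C)\cap\ker A=\varnothing$, it separates $C$ from $\ker A$ by a linear hyperplane with normal $u$, lifts $u$ to $v$ with $A^\top v=u$, and concludes that $AC$ lies in a closed half-space that does not contain all of $\lin AC$. You instead argue directly: pick $x_1\in\relint C$, use linearity of $AC$ to find $x_2\in C$ with $A(x_1+x_2)=0$, and then show $x_1+x_2\in\relint C$ via the line segment principle ($a\in\relint C$, $b\in C$ implies $\tfrac12(a+b)\in\relint C$) combined with $2\relint C=\relint(2C)=\relint C$ for a cone. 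This is a real simplification — it avoids the separation theorem entirely and is constructive, at the price of invoking the (standard but nontrivial) facts that $\relint C\neq\varnothing$ and the line segment principle. For (a) $\Rightarrow$ (b) both proofs exploit that one can perturb a relative interior point of $\ker A\cap C$ in every direction of $\lin C$; the paper does this with an explicit simplex $t+\conv(\pm re_1,\dots,\pm re_{n'})$ around the point, while you observe more economically that $x_0-\varepsilon x\in C$ for each $x\in C$ and small $\varepsilon>0$, so $AC$ is a convex cone closed under $y\mapsto -y$ and hence a subspace. Both steps of your proof check out; just make sure to state explicitly that $\aff C=\lin C$ because $0\in C$ (which holds under the paper's definition of a convex cone, taking $\lambda_1=\lambda_2=0$), since that is what licenses the perturbation $x_0+\varepsilon(-x)$ staying in $\aff C$.
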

The proof is postponed to Section~\ref{705}.
In the remaining part of this section, we collect some basic facts about convex cones.  Most of them are well known, but for the reader's convenience we provide their proofs.

\vskip 10pt

The first property gives a criterion for the conic hull of a set to be a linear subspace.
\begin{proposition}\label{1134}
For  any   set  $M\subset \R^n$ the following two conditions are equivalent:
\begin{enumerate}
    \item[(a)] $0\in \relint \conv M$;
    \item[(b)] $\pos M=\lin M$.
\end{enumerate}
\end{proposition}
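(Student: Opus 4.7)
The plan is to prove the two implications separately, establishing (a)$\Rightarrow$(b) by a short neighborhood argument and (b)$\Rightarrow$(a) by producing an explicit convex combination representing $0$. Set $L:=\lin M$ throughout, and assume $M\ne\varnothing$.

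For (a)$\Rightarrow$(b), I would start from the observation that $0\in\conv M\subseteq\aff M$ forces $\aff M$ to be a linear subspace, hence equal to $L$. Hypothesis (a) then supplies some $\varepsilon>0$ with $L\cap\varepsilon\B^n\subseteq\conv M$, so for every $v\in L$ a sufficiently small positive multiple of $v$ lies in $\conv M$, giving $v\in\pos(\conv M)=\pos M$. This shows $L\subseteq\pos M$, and the reverse inclusion is immediate since $M\subseteq L$ and $L$ is itself a convex cone.

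For (b)$\Rightarrow$(a), set $d:=\dim L$ and fix a basis $v_1,\dots,v_d$ of $L$. Since $\pm v_i\in\pos M$, the representation~\eqref{2230} of the positive hull lets me write
\begin{equation*}
v_i = \alpha_i z_i,\qquad -v_i = \beta_i w_i,\qquad i=1,\dots,d,
\end{equation*}
with $z_i,w_i\in\conv M$ and $\alpha_i,\beta_i\geq 0$; strict positivity $\alpha_i,\beta_i>0$ follows from $v_i\ne 0$. Solving each identity for $0$ and averaging yields
\begin{equation*}
0=\frac{1}{d}\sum_{i=1}^{d}\Bigl(\frac{\beta_i}{\alpha_i+\beta_i}z_i+\frac{\alpha_i}{\alpha_i+\beta_i}w_i\Bigr),
\end{equation*}
so $0$ is a convex combination of $z_1,w_1,\dots,z_d,w_d\in\conv M$ in which every coefficient is strictly positive. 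The vectors $z_i$ are nonzero multiples of the basis $v_i$, so $\{z_i,w_i\}_{i=1}^{d}$ linearly spans $L$; since $0$ belongs to its affine hull, the affine and linear spans of this finite set coincide and equal $L=\aff\conv M$. Invoking the standard characterization that a strictly positive convex combination of finitely many points whose affine hull equals the ambient affine hull lies in the relative interior of that hull, one obtains $0\in\relint\conv\{z_i,w_i\}\subseteq\relint\conv M$.

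The main difficulty lies in the ``relative interior'' part of (b)$\Rightarrow$(a). Deducing the weaker statement $0\in\conv M$ alone is short: any single pair $v_i$, $-v_i$ already exhibits $0$ as a convex combination of two elements of $\conv M$. What forces the basis construction above is the need to guarantee \emph{simultaneously} that the support of some convex representation of $0$ affinely spans all of $L$, so that $0$ genuinely lands in the \emph{relative} interior rather than on a proper face of $\conv M$.
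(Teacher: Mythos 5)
Your proof is correct in substance and takes essentially the same route as the paper's: for (b)$\Rightarrow$(a) the paper likewise fixes a basis of $\lin M$ (an orthonormal one), uses~\eqref{2230} to place positive multiples of $\pm e_i$ inside $\conv M$, and concludes; your version with a general basis and the points $z_i,w_i$ is the same argument spelled out in more detail (the paper compresses the final relative-interior step into ``and (a) follows''), and you also write out the (a)$\Rightarrow$(b) direction that the paper declares obvious. The one error is computational: in your displayed identity the coefficients of $z_i$ and $w_i$ are swapped. From $v_i=\alpha_i z_i$ and $-v_i=\beta_i w_i$ one gets $\alpha_i z_i+\beta_i w_i=v_i-v_i=0$, so the strictly positive convex combination annihilating the $i$-th pair is $\frac{\alpha_i}{\alpha_i+\beta_i}z_i+\frac{\beta_i}{\alpha_i+\beta_i}w_i$; the combination you wrote equals a nonzero multiple of $v_i$ whenever $\alpha_i\neq\beta_i$. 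Once the coefficients are restored, the remainder of your argument --- the spanning of $L$ by $\{z_i,w_i\}$, the identification $\aff\conv M=L$, and the standard fact that a strictly positive convex combination of points whose affine hull is the ambient one lies in the relative interior --- goes through verbatim, and it correctly isolates why the basis construction (rather than a single pair $\pm v_i$) is needed to reach the \emph{relative interior}.
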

\begin{proof}
It is obvious that (a) implies (b). Now assume that $\pos M=\lin M$. Let $n':=\dim \lin M$ and let $e_1,\dots,e_{n'}$ be an orthonormal basis in $\lin M$. By~(b) and~\eqref{2230} we have that there exist $a_1,b_1\dots,a_{n'},b_{n'}>0$ such that
\[
a_1e_1,-b_1e_1,\dots,a_{n'}e_{n'},-b_{n'}e_{n'}\in \conv M,
\]
and  (a) follows.
\end{proof}
In the case when $\lin M = \R^n$ the proposition simplifies as follows.
\begin{corollary}\label{1100}
For  any   set  $M\subset \R^n$ the following two conditions are equivalent:
\begin{enumerate}
    \item[(a)] $0\in\Int \conv M$;
    \item[(b)] $\pos M=\R^n$.
\end{enumerate}
\end{corollary}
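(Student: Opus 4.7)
The plan is to reduce this corollary directly to Proposition~\ref{1134} by observing that, once one knows $\lin M = \R^n$, the relative interior of $\conv M$ in its affine hull coincides with the topological interior of $\conv M$ in $\R^n$. So the work splits into checking, for each implication, that the ambient dimension condition $\lin M = \R^n$ holds, after which Proposition~\ref{1134} finishes the job.

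First I would prove the direction (b) $\Rightarrow$ (a). Assume $\pos M = \R^n$. Since $\pos M \subseteq \lin M$, this forces $\lin M = \R^n$. Proposition~\ref{1134} then yields $0 \in \relint \conv M$. In particular, $0 \in \conv M$, so $\aff \conv M = \aff M$ is a linear subspace, hence equal to $\lin M = \R^n$. Therefore $\relint \conv M = \Int \conv M$, and we conclude $0 \in \Int \conv M$.

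For the converse (a) $\Rightarrow$ (b), assume $0 \in \Int \conv M$. Then $\conv M$ has nonempty interior in $\R^n$, which forces $\aff \conv M = \R^n$. Since $0 \in \conv M$, the affine hull $\aff \conv M$ contains the origin and hence coincides with $\lin M$, giving $\lin M = \R^n$. In this full-dimensional situation, $\Int \conv M = \relint \conv M$, so $0 \in \relint \conv M$. Proposition~\ref{1134} now yields $\pos M = \lin M = \R^n$.

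There is really no serious obstacle here: the only subtlety is the interplay between $\aff$, $\lin$, $\Int$, and $\relint$, and the small observation that $0 \in \conv M$ forces $\aff \conv M$ to be a linear subspace (so it equals $\lin M$). Once that is noted, the corollary follows in two short lines from Proposition~\ref{1134}.
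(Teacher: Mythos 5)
Your proof is correct and follows exactly the route the paper intends: the paper states this corollary without proof as the special case $\lin M=\R^n$ of Proposition~\ref{1134}, and your argument just supplies the routine bookkeeping (that $\pos M\subseteq\lin M$, that $0\in\conv M$ forces $\aff\conv M=\lin M$, and that $\relint=\Int$ in the full-dimensional case). Nothing is missing.
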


The next property  follows directly from the definition of the conic hull given in~\eqref{2230a}.
\begin{proposition}\label{1054}
Let $k\in\N$. Consider  some matrix $A\in\R^{k\times n}$. Then for any set $M\subset \R^n$,
 \[
 \pos AM=A\pos M.
 \]
\end{proposition}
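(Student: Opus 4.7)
The plan is to verify the two inclusions directly, invoking only the linearity of $A$ and the explicit description of the conic hull in~\eqref{2230a}.

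For the inclusion $\pos AM \subseteq A \pos M$, I would take an arbitrary $y \in \pos AM$. By~\eqref{2230a} applied to the set $AM \subset \R^k$, we can write
\[
y = \sum_{i=1}^{m} \lambda_i (A x_i), \qquad x_1,\dots,x_m \in M,\ \lambda_1,\dots,\lambda_m \geq 0,\ m \in \N.
\]
Linearity of $A$ lets me pull the sum inside: $y = A\bigl(\sum_{i=1}^m \lambda_i x_i\bigr)$. Since $\sum_{i=1}^m \lambda_i x_i \in \pos M$ by~\eqref{2230a}, this gives $y \in A \pos M$.

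For the reverse inclusion $A \pos M \subseteq \pos AM$, I would start with $y \in A \pos M$, so $y = Az$ for some $z \in \pos M$. Writing $z = \sum_{i=1}^m \lambda_i x_i$ with $x_i \in M$ and $\lambda_i \geq 0$ and again using linearity of $A$ gives $y = \sum_{i=1}^m \lambda_i (A x_i)$, which is an element of $\pos AM$ by~\eqref{2230a} since each $A x_i \in AM$.

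There is essentially no obstacle here: the statement is a direct consequence of the fact that a linear map commutes with the formation of non-negative linear combinations. The only mild point to be careful about is to make sure we are using the combinatorial description~\eqref{2230a} of $\pos$ (rather than the alternative~\eqref{2230} via $\conv M$), since the latter would ask us to verify that $A \conv M = \conv A M$ as an intermediate step, which, while also true, is needless here.
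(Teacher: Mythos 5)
Your proof is correct and matches the paper's approach: the paper simply remarks that the proposition ``follows directly from the definition of the conic hull given in~\eqref{2230a}'', and your two-inclusion argument via linearity of $A$ is exactly the verification being alluded to.
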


It is easy to see that if $C\subset\R^n$ is a convex cone, then $(\relint C)\cup\{0\}$ is a convex cone, too. It is to be expected that this cone has the same Grassmann angles as $C$.
\begin{proposition}\label{801}
For any convex cone $C\subset\R^n$ and $k\in \{0,1,\dots,n\}$,
\[
\gamma_k((\relint C) \cup\{0\})=\gamma_k(C).
\]
\end{proposition}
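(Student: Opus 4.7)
The plan is to prove $\upsilon_k(C) = \upsilon_k(D)$ for $D := (\relint C) \cup \{0\}$, and then deduce equality of the Grassmann angles via the conic Crofton formula~\eqref{1818}.

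If $C$ is a linear subspace (including $C = \{0\}$), then $\relint C = C$, so $D = C$ and there is nothing to prove. Assume therefore that $C$ is neither. Proposition~\ref{1134} applied with $M := C$ (using $\conv C = C$ and $\pos C = C$ since $C$ is a convex cone) forces $0 \notin \relint C$, so the union defining $D$ is disjoint. A routine verification using that $\relint C$ is itself closed under positive scaling and pairwise convex combinations shows that $D$ is a convex cone. Since $\lin D = \lin(\relint C) = \lin C$, if $D$ were a linear subspace it would equal $\lin C$, which would force $\lin C \setminus \{0\} \subseteq \relint C \subseteq C$ and hence $C = \lin C$, a contradiction. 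Thus $D$ is a convex cone that is not a linear subspace.

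The key observation is that $\cl D = \cl C$: by the standard identity $\cl(\relint C) = \cl C$, which holds whenever $\relint C \neq \varnothing$, we have $\cl D = \cl(\relint C) \cup \{0\} = \cl C$. Consequently, $\dist(x, D) = \dist(x, C)$ for every $x \in \R^n$. Substituting $x = Z$ uniform on $\mathbb{S}^{n-1}$ and applying the conic Steiner formula~\eqref{1130} to both $C$ and $D$ yields identical left-hand sides, and by the linear independence of $\beta_{1,n}, \ldots, \beta_{n,n}$ this forces $\upsilon_k(D) = \upsilon_k(C)$ for every $k \in \{0, \ldots, n\}$.

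Since both $C$ and $D$ are non-subspace convex cones, the conic Crofton formula~\eqref{1818} applies in its factor-two form to each, giving
\[
\gamma_k(C) = 2\bigl(\upsilon_{k+1}(C) + \upsilon_{k+3}(C) + \cdots\bigr) = 2\bigl(\upsilon_{k+1}(D) + \upsilon_{k+3}(D) + \cdots\bigr) = \gamma_k(D).
\]
The only subtle point is verifying that $D$ is not a linear subspace, so that both sides of the Crofton formula carry the same factor of $2$; this is exactly where Proposition~\ref{1134} is used.
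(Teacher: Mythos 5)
Your proof is correct and follows essentially the same route as the paper's: both derive $\dist(x,C)=\dist\bigl(x,(\relint C)\cup\{0\}\bigr)$ from the identity $\cl(\relint C)=\cl C$, conclude equality of the conic intrinsic volumes via the Steiner formula~\eqref{1130}, and pass to the Grassmann angles via the Crofton formula~\eqref{1818}. Your additional check that $(\relint C)\cup\{0\}$ is a linear subspace exactly when $C$ is (so that the factor $2$ in~\eqref{1818} matches on both sides) is a welcome refinement of a point the paper's proof leaves implicit.
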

\begin{proof}
Let $x\in\R^n$. For any set $M\subset \R^n$ we have $\dist(x,M)=\dist(x,\cl M)$ and it is well known  (see, e.g., \cite[Theorem~1.1.14]{SchneiderBook}) that for convex $M$,
\begin{equation*}
\cl M = \cl \relint M.
\end{equation*}
Therefore,  for any convex cone $C\subset\R^n$ and any $x\in\R^n$,
\[
\dist(x,C)=\dist(x,\relint C)=\dist(x,(\relint C)\cup\{0\}).
\]
Applying~\eqref{1130} and~\eqref{1818} completes the proof.
\end{proof}
\begin{corollary}\label{1027}
    If $C\subset \R^n$ is  a convex cone, then for all $k\in \{0,1,\dots,n-1\}$,
\begin{align*}\label{1026}
    \gamma_k(C)=\P[(\relint C)\cap W_{n-k}\ne\varnothing].
\end{align*}
In particular, the event $\{C\cap W_{n-k}\neq \{0\}\}\cap \{(\relint C)\cap W_{n-k} = \varnothing\}$ has probability $0$.
 \end{corollary}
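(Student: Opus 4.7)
The plan is to deduce the formula from Proposition~\ref{801} (which identifies the Grassmann angle of $C$ with that of $(\relint C)\cup\{0\}$) combined with the defining probability~\eqref{1138}. The advantage of passing to the cone $(\relint C)\cup\{0\}$ is that it naturally brings the relative interior into the probabilistic statement while keeping the cone property required by \eqref{1138}.

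First I would rewrite $\gamma_k(C)=\gamma_k((\relint C)\cup\{0\})$ via~\eqref{1138} to get
\[
\gamma_k(C)=\P\!\left[\bigl((\relint C)\cup\{0\}\bigr)\cap W_{n-k}\neq \{0\}\right].
\]
Since $0\in W_{n-k}$, the intersection on the right equals $((\relint C)\cap W_{n-k})\cup\{0\}$, and this set differs from $\{0\}$ precisely when $(\relint C)\cap W_{n-k}$ contains a non-zero point. Here Proposition~\ref{1134} intervenes: under the assumption (implicit in~\eqref{1026}) that $C$ is not a linear subspace, it gives $0\notin\relint C$, so any point of $\relint C$ is automatically non-zero, and the condition reduces to $(\relint C)\cap W_{n-k}\neq\varnothing$, establishing the claimed identity.

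For the \emph{in particular} claim, I would compare the two events directly. Every point of $\relint C$ lies in $C$ and (by $0\notin\relint C$) is non-zero, so the event $\{(\relint C)\cap W_{n-k}\neq\varnothing\}$ is contained in $\{C\cap W_{n-k}\neq\{0\}\}$. Both events have probability $\gamma_k(C)$ -- the larger one by~\eqref{1138} and the smaller one by the identity just proved -- so their set-theoretic difference, which is exactly the event in the statement, has probability zero. The main (and essentially only) subtlety in the whole argument is the edge case $0\in\relint C$; but this case forces $C$ to be a linear subspace by Proposition~\ref{1134}, and then $(\relint C)\cap W_{n-k}\supseteq\{0\}$ is automatically non-empty, so the exceptional event is empty and the conclusion still holds trivially.
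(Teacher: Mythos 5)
Your proof is correct and follows essentially the same route as the paper's: Proposition~\ref{801} combined with~\eqref{1138}, then Proposition~\ref{1134} to rule out $0\in\relint C$, and the \emph{in particular} part by comparing the probabilities of the two nested events. One small caveat: for a linear subspace $L_m$ with $k\geq m$ the displayed identity actually fails ($\gamma_k(L_m)=0$ while $(\relint L_m)\cap W_{n-k}\ni 0$ always), so your closing remark that the edge case is handled ``trivially'' is accurate only for the \emph{in particular} claim --- but the paper's own proof makes the same (in fact stronger) claim of triviality there, so this is a defect of the statement rather than of your argument.
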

\begin{proof}
 The statement is trivial if $C$ is a linear subspace, see~\eqref{2256}, hence we exclude this possibility in the following.     By Proposition~\ref{801} and by~\eqref{1138},
    \begin{align*}
       \gamma_k(C)=\gamma_k((\relint C)\cup \{0\})&=\P[((\relint C)\cup \{0\})\cap W_{n-k}\ne \{0\}].
    \end{align*}
Recall that $C\ne\lin C$. Therefore it follows from Proposition~\ref{1134}  that $ 0\not\in\relint C$, and thus we have
    \begin{align*}
        \gamma_k(C)=\P[(\relint C)\cap W_{n-k}\ne \varnothing].
    \end{align*}
    The second claim of the corollary follows  from this relation after recalling~\eqref{1138}.
\end{proof}

Recall that  $A\in\R^{k\times n}$  denotes the standard Gaussian random matrix.
\begin{proposition}\label{942}
For any $k \in \N$ and for arbitrary cone $C\subset\R^n$,
\[
\P[\dim A C=\min(k,\dim C)]=1.
\]
\end{proposition}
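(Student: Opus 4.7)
Set $d := \dim C$ and $m := \min(k, d)$. The plan is to prove the two inequalities $\dim AC \leq m$ (deterministically) and $\dim AC \geq m$ (almost surely) separately.

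\textbf{Upper bound.} Since $C \subseteq \lin C$, we have $AC \subseteq A(\lin C)$, and $A(\lin C)$ is a linear subspace of $\R^k$ whose dimension is at most $\min(k, \dim \lin C) = \min(k, d) = m$. Because the dimension of a cone is the dimension of its linear hull and $\lin AC \subseteq A(\lin C)$, this gives $\dim AC \leq m$ for every realization of $A$.

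\textbf{Lower bound.} First I would exhibit $d$ linearly independent vectors inside $C$ itself. Since $\dim C = d$, the set $\relint C$ is a nonempty relatively open subset of the $d$-dimensional space $\lin C$; picking a point $x_0 \in \relint C$ and a small relative ball around it lying in $C$, one can select $d$ vectors $x_1, \ldots, x_d \in C$ that form a basis of $\lin C$. Then the claim reduces to showing that, with probability one, the vectors $Ax_1, \ldots, Ax_d$ span a subspace of $\R^k$ of dimension $m$. Equivalently, writing $X \in \R^{n \times d}$ for the matrix with columns $x_1, \ldots, x_d$ (which has rank $d$), I must show that the random $k \times d$ matrix $AX$ has rank $m = \min(k,d)$ almost surely.

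\textbf{Key fact.} For the rank statement I would use the standard observation that, for any fixed $d$-dimensional subspace $V \subseteq \R^n$, the restriction $A|_V$ has rank $\min(k,d)$ almost surely. One way to argue: choose an orthonormal basis of $V$ and complete it to an orthonormal basis of $\R^n$; by the rotational invariance of the distribution of $A$, the matrix representing $A|_V$ in these coordinates is itself a $k \times d$ matrix with i.i.d.\ standard Gaussian entries, and such a matrix has maximal rank $\min(k,d)$ almost surely because the complement is the zero set of the nonzero polynomial given by the sum of squares of $\min(k,d) \times \min(k,d)$ minors, hence has Lebesgue measure zero. Applied to $V = \lin C = \lin(x_1, \ldots, x_d)$, this yields $\dim A(\lin C) = m$ a.s., and since $Ax_1, \ldots, Ax_d \in AC$ span $A(\lin C)$, we conclude $\dim AC \geq m$ almost surely.

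Combining the two bounds gives $\dim AC = m$ almost surely. The only non-routine step is the rank statement for the Gaussian map $A|_V$, but this is standard and requires only the algebraic-variety argument sketched above; the remaining observations about linear and conic hulls are elementary.
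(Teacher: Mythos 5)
Your proof is correct, and it follows the same skeleton as the paper's (deterministic upper bound, then reduce the lower bound to a full-rank statement for the restriction of $A$ to a fixed subspace spanned by vectors of $C$), but the justification of the key probabilistic step is genuinely different. The paper observes that $\dim AC\leq m-1$ forces $L_m\cap\ker A\neq\{0\}$ for a fixed $m$-dimensional subspace $L_m\subset C$'s linear hull, and then kills this event using $\ker A\eqdistr W_{n-k}$ together with the already-recorded fact $\gamma_k(L_m)=0$ for $k\geq m$ (see~\eqref{2256}); in other words, it stays entirely inside the Grassmann-angle framework and uses only the distribution of $\ker A$. You instead invoke rotational invariance to identify $A|_{\lin C}$ with an i.i.d.\ Gaussian $k\times d$ matrix and prove almost-sure maximal rank directly via the measure-zero zero set of the sum of squares of maximal minors. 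Your route is more self-contained (it needs no Grassmannian or angle machinery and would work for any absolutely continuous entry distribution), while the paper's route is the one that survives verbatim when $A$ is replaced by the orthogonal projection onto $W_k$ -- a substitution the authors explicitly exploit later, since the only property of $A$ they use is the law of its kernel. One cosmetic remark: your detour through $\relint C$ to find $d$ linearly independent vectors in $C$ is unnecessary; since $\dim\lin C=d$, the set $C$ contains $d$ linearly independent vectors for purely linear-algebraic reasons, which is all the paper asserts.
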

\begin{proof}
Put $m:=\min(k,\dim C)$. We obviously have $\dim AC\leq m$. It remains to show that the event $\dim AC \leq m-1$ has probability zero.
There exist linearly independent vectors $v_1,\dots, v_{m}\in C$. Let
\[
L_m:=\lin(v_1,\dots,v_m).
\]
If $\dim A C\leq m-1$, then $Av_1,\dots, A v_{m}$ are linearly dependent, that is, there exists $(c_1,\dots,c_m)\in\R^m\setminus\{0\}$ such that
\[
\sum_{i=1}^mc_iAv_i=A\sum_{i=1}^mc_iv_i=0,
\]
which is equivalent to
\[
L_m\cap\ker A\ne\{0\}.
\]
Thus,
\begin{align*}
  \P[\dim A C\leq m-1]\leq  \P[L_m\cap\ker A\ne\{0\}].
\end{align*}
If $k\geq n$, then $\ker A= \{0\}$ with probability $1$, hence the right-hand side is $0$. Let $k<n$.  Recalling~\eqref{1050},~\eqref{1138}, and~\eqref{2256} we arrive at
\begin{align*}
  \P[\dim A C\leq m-1]&\leq   \P[L_m\cap  W_{n-k}\ne\{0\}]=\gamma_k(L_m)=0
\end{align*}
since $k\geq m$.
\end{proof}


\section{Proofs of Theorem~\ref{2054} and Theorem~\ref{655}}\label{2248}

\begin{proof}[Proof of Theorem~\ref{2054}]
As was mentioned above, the second part of~\eqref{1942} follows directly from Corollary~\ref{1100}. Let us prove the first one.   Due to Proposition~\ref{1054} the task is to show that
\begin{equation}\label{1049}
\gamma_k(C)=\P[AC=\R^k],
\end{equation}
where $C:=\pos M$.

By assumption, $C\ne\lin C$. We may assume that $k\in \{1,\ldots,n-1\}$ since for $k\geq n$ both sides of~\eqref{1049} vanish.  It follows  from Corollary~\ref{1027} that
\begin{align*}
    \gamma_k(C)=\P[(\relint C)\cap W_{n-k}\ne \varnothing].
\end{align*}
Applying Lemma~\ref{1146} together with~\eqref{1050} gives
\begin{align}\label{1127}
    \gamma_k(C)=\P[A   C=\lin A   C].
\end{align}
If $\dim C \geq k$, then by Proposition~\ref{942} we have $\P[\dim A   C = k]=1$, hence $\P[\lin A C = \R^k] = 1$ and~\eqref{1049} follows. If $\dim C \leq k$, then again by Proposition~\ref{942} and by~\eqref{2256} both sides of~\eqref{1049} vanish.
\end{proof}

\begin{proof}[Proof of Theorem~\ref{655}]
We shall frequently use the following invariance property of the Gaussian random matrix: if $O_1:\R^n \to\R^n$ and $O_2:\R^k\to\R^k$ are (deterministic) orthogonal transformations, then the random matrix $O_2AO_1$ has the same distribution as $A$.

By Proposition~\ref{942}, the dimension of $AC$ is $m := \min(\dim C, k)$ with probability $1$. Let us prove that for all $j \in \{0,\ldots,m-1\}$,
\begin{equation}\label{eq:E_gamma_j_proof}
\E [\gamma_j(A  C)]
=
\gamma_j(C).
\end{equation}

Without restriction of generality, we may assume that $\dim C = n$. Indeed, if $\ell := \dim C < n$, the invariance property of $A$ shows that we can assume $\lin C= \lin (e_1,\ldots, e_\ell)$, where $e_1,\ldots,e_n$ is the standard orthonormal basis of $\R^n$. Identifying $\lin C$ with $\R^\ell$ and noting that the restriction of $A$ to $\R^\ell$ is also  a Gaussian linear operator from $\R^\ell$ to $\R^k$, we are in the setting when $C$ has full dimension.

So, let $\dim C = n$. The dimension of $AC$ is then $m= \min (k, n)$, with probability $1$. By Corollary~\ref{1027} applied to the cone $AC\subset \R^k$, for all $j\in \{0,1,\ldots, k-1\}$ we have
$$
\E [\gamma_j(AC)] = \P [U_{k-j} \cap \relint (AC)\neq \varnothing],
$$
where $U_{k-j}$ is a random, uniformly distributed, $k-j$-dimensional linear subspace of $\R^k$ that is independent of $A$. By the invariance property of $A$,  the random cone $AC$ has rotationally invariant distribution, that is it has the same distribution as $O_2 A C$ for every (deterministic) orthogonal transformation $O_2:\R^k\to\R^k$. It follows that we can replace $U_{k-j}$ by any \textit{deterministic} linear subspace $L_{k-j}\subset \R^k$ of dimension $k-j$:
\begin{equation}\label{eq:proof1}
\E [\gamma_j(AC)] = \P [L_{k-j} \cap \relint (AC)\neq \varnothing].
\end{equation}
In a moment, we shall show that  this implies that
\begin{equation}\label{eq:proof2}
\E [\gamma_j(AC)] = \P [A^{-1}L_{k-j} \cap \Int C\neq \varnothing],
\end{equation}
where $A^{-1}L_{k-j}$ denotes the preimage of $L_{k-j}$ under $A$.  Given this, we can complete the proof of~\eqref{eq:E_gamma_j_proof} as follows. Recall that $j < \min (k,n)$.  With probability $1$, the matrix $A$ has full rank and the linear subspace $A^{-1} L_{k-j}$ has codimension  $j$ in $\R^n$. By the invariance property of $A$, the distribution of $A^{-1} L_{k-j}$ is invariant with respect to orthogonal transformations of $\R^n$. Hence, $A^{-1} L_{k-j}$ has the same distribution as $W_{n-j}$ and we have
$$
\E [\gamma_j(AC)] = \P [W_{n-j} \cap \Int C\neq \varnothing] = \gamma_j(C)
$$
by Corollary~\ref{1027}.

To complete the proof of~\eqref{eq:E_gamma_j_proof} it remains to check the equivalence of~\eqref{eq:proof1} and~\eqref{eq:proof2}, for which it suffices to verify that
\begin{equation}\label{eq:proof_tech1}
\{A^{-1}L_{k-j} \cap \Int C \neq \varnothing\} \subset \{L_{k-j} \cap \relint (AC)\neq \varnothing\}
\end{equation}
and
\begin{equation}\label{eq:proof_tech2}
\P[\{L_{k-j} \cap \relint (AC)\neq \varnothing\} \backslash \{A^{-1}L_{k-j} \cap \Int C \neq \varnothing\}] = 0.
\end{equation}

Let us prove~\eqref{eq:proof_tech1}.  If the event $\{A^{-1}L_{k-j} \cap \Int C \neq \varnothing\}$ occurs, then there exists $v\in \Int C$ such that $Av \in L_{k-j}$. Let $O\subset C \subset \lin C=\R^n$ be an open set such that $v\in O$. Consider $A$ as a linear operator defined on $\lin C = \R^n$ with the image $A \lin C = \lin (AC)$.
Since linear surjective maps are open (that is, they map open sets to open sets), the set $AO$ is relatively open in $\lin(AC)$. Since $Av \in AO \subset AC$, it follows that $Av \in \relint (AC)$. Recalling that $Av\in L_{k-j}$, we conclude that the event $\{L_{k-j} \cap \relint (AC)\neq \varnothing\}$ occurs, thus proving~\eqref{eq:proof_tech1}.

Let us prove~\eqref{eq:proof_tech2}. Consider the event $E:= \{L_{k-j} \cap \relint (AC)\neq \varnothing\} \backslash \{A^{-1}L_{k-j} \cap \Int C \neq \varnothing\}$. On this event, we may take some $y\in L_{k-j}\cap \relint (AC)$. We may assume that $y\neq 0$. Indeed, if it happens that $y=0$, then $0\in \relint (AC)$, which implies that $AC = \lin (AC)$ is a linear subspace of $\R^k$ by Proposition~\ref{1134} and, since
$$
\dim \lin (AC) + \dim L_{k-j} = \min (k,n) + k - j > k,
$$
we have $L_{k-j}\cap \relint (AC)  = L_{k-j}\cap \lin (AC) \neq \{0\}$, which means that we may take $y\neq 0$. Now, recalling that $y\in L_{k-j}\cap \relint (AC)$ it follows that there exists $x\in C$  such that $y = Ax$ and $x\neq 0$. Then,  $x\in A^{-1}L_{k-j}$ and it follows that on the event $E$, we have $A^{-1}L_{k-j}\cap C \neq \{0\}$. To summarize,
$$
E\subset \{A^{-1}L_{k-j} \cap C \neq \{0\}\} \cap \{A^{-1}L_{k-j} \cap  \Int C  = \varnothing\}.
$$
However, the event on the right hand-side has probability $0$ because, as we explained above, $A^{-1}L_{k-j}$ is a random linear subspace of $\R^n$ with the same distribution as $W_{n-j}$, and hence the claim follows from Corollary~\ref{1027}.  The proof of~\eqref{eq:proof_tech2} is complete.
\end{proof}

\section{Proof of Theorem~\ref{1338}}\label{2225}
It was shown in~\cite[Remark~4.7]{MT14} that for any $r>0$ and any convex cone $C\subset\R^n$,
\begin{equation}\label{1247}
\E\,\exp\left(\frac{1-r^{-2}}{2}|\Pi_{C}(N)|^2\right)=\sum_{k=0}^n r^{k}\upsilon_k(C),
\end{equation}
where $\Pi_{C}\,:\,\R^n\to C$ is a metric projection on the cone $C$ defined as
$$
\Pi_{C}(x):=\arg\min\{|x-y|:y\in \cl {C}\}.
$$
For any $u\in\mathbb S^{n-1}$ it obviously holds
$$
\argmin\limits_{\lambda u}\{|x-\lambda u|:\lambda\geq0\}=\langle x,u\rangle_+ u.
$$
By convention, the left-hand side is defined to be $\lambda u$, where $\lambda \geq 0$ is chosen to minimize the function $\lambda \mapsto |x-\lambda u|$. Similar convention will be used below.
Therefore,
\begin{align*}
    \Pi_{C}(x)&=\argmin\limits_{\lambda u}\{|x-\lambda u|:u\in \cl {C}\cap\mathbb S^{n-1},\lambda\geq 0\}
    \\&=\argmin\limits_{\langle x,u\rangle_+u}\{|x-\langle x,u\rangle_+u|:u\in \cl {C}\cap\mathbb S^{n-1}\}.
\end{align*}
By the Pythagorean theorem,
\begin{align*}
    |x|^2=|\langle x,u\rangle u|^2+|x-\langle x,u\rangle u|^2=|\langle x,u\rangle|^2+|x-\langle x,u\rangle u|^2.
\end{align*}
Thus,
\begin{align*}
    \Pi_{C}(x)=\argmax\limits_{\langle x,u\rangle_+u}\{\langle x,u\rangle_+:u\in \cl {C}\cap\mathbb S^{n-1}\}.
\end{align*}
Taking the norm, we obtain
\begin{align*}
    |\Pi_{C}(x)|&=\max\{\langle x,u\rangle_+:u\in \cl {C}\cap\mathbb S^{n-1}\}
    \\&=\sup\{\langle x,u\rangle_+:u\in {C}\cap\mathbb S^{n-1}\}
    \\&=\sup\{{\langle x,z\rangle_+}/{|z|}:z\in {C}\backslash\{0\}\}.
\end{align*}
Taking $C=\pos M$ leads to
\begin{align*}
    |\Pi_{\pos M}(x)|&=\sup\{\langle x,z\rangle_+/|z|:z\in (\pos M)\backslash\{0\}\}
    \\&=\sup\{\langle x,z\rangle_+/|z|:z\in (\conv M)\backslash\{0\}\},
\end{align*}
which together with~\eqref{1247} completes the proof. \hfill $\Box$

\section{Proof of Lemma~\ref{1146}}\label{705}

The crucial step in the proof of Lemma~\ref{1146} is the next fundamental result which gives a criterion for convex sets to be properly separated.
Let $M_1$ and $M_2$ be non-empty convex sets in $\R^n$. An affine hyperplane $H$ is said to separate them properly, if they lie in the different closed half-spaces generated by $H$ and their union is not contained in $H$.

\begin{theorem}[Separation theorem]\label{652}
The relative interiors of two non-empty convex sets $M_1, M_2\subset\R^n$ do not intersect if and only if there exists an affine hyperplane which separates them properly.
\end{theorem}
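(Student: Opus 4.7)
My plan is to prove the two implications separately. For ($\Leftarrow$), I will suppose $H = \{x\in\R^n : \langle a, x\rangle = \alpha\}$ properly separates $M_1, M_2$, with $M_1 \subset H^+ := \{\langle a,\cdot\rangle \geq \alpha\}$ and $M_2 \subset H^- := \{\langle a,\cdot\rangle \leq \alpha\}$, and argue by contradiction. If $z \in \relint M_1 \cap \relint M_2$, then $z \in H^+\cap H^- = H$, so $\langle a, z\rangle = \alpha$. For any $y\in M_1$, the relative interiority of $z$ in $M_1$ yields $\varepsilon>0$ with $z+\varepsilon(z-y)\in M_1$; evaluating $\langle a,\cdot\rangle$ on this point gives $\alpha + \varepsilon(\alpha - \langle a, y\rangle)$, which is less than $\alpha$ unless $\langle a, y\rangle = \alpha$. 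Hence $M_1\subset H$, and symmetrically $M_2\subset H$, contradicting $M_1\cup M_2\not\subset H$.

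For the substantive direction ($\Rightarrow$), the key reduction is to introduce the convex set $D := M_1 - M_2 = \{x-y : x\in M_1, y\in M_2\}$. The standard calculus of relative interiors gives $\relint D = \relint M_1 - \relint M_2$, so the hypothesis becomes $0\notin\relint D$. It then suffices to exhibit $a\in\R^n\setminus\{0\}$ such that $\langle a,d\rangle\geq 0$ for every $d\in D$, with strict inequality at some $d_0 = x_0-y_0\in D$: for then $\inf_{x\in M_1}\langle a,x\rangle\geq \sup_{y\in M_2}\langle a,y\rangle$, any $\alpha$ in this (non-empty) interval defines a separating hyperplane $H$, and $\langle a,x_0-y_0\rangle>0$ forces at least one of $x_0,y_0$ off $H$, giving proper separation.

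To produce $a$, I will work inside the affine hull $L := \aff D$. If $0\notin L$, any hyperplane strictly separating $\{0\}$ from $L$ in $\R^n$ does the job. Otherwise $0\in L\setminus\relint D$, i.e., $0$ is a relative boundary point of $D$ in $L$. I approximate $0$ by a sequence $p_n\in L\setminus \cl D$; nearest-point projection of $p_n$ onto the closed convex set $\cl D\cap L$ (inside the ambient space $L$) produces unit inward normals $a_n\in L$ with $\langle a_n, d-p_n\rangle\geq 0$ for all $d\in D$; extracting a convergent subsequence yields a unit vector $a\in L$ with $\langle a,d\rangle\geq 0$ for all $d\in D$, which I then extend by $0$ on $L^\perp$. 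Strictness at some $d_0$ holds because otherwise $D$ would be contained in the linear hyperplane $a^\perp\cap L$ of $L$, forcing $\relint D$ to equal $D$ and contradicting $0\in L\setminus\relint D$ (unless $D\subset a^\perp$ is chosen differently; a small case check of the edge case $D = \{0\}$, which cannot occur since $M_1, M_2$ are non-empty and disjoint in relative interior implies $D$ has non-trivial content, completes this).

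The main obstacle is the supporting-hyperplane step itself; all other parts are bookkeeping on relative interiors and affine hulls. Since this is a classical theorem of convex analysis (see, e.g., Schneider's book cited in the bibliography, or Rockafellar's Theorem~11.3), the cleanest alternative is simply to cite it; the proof sketched above is the textbook route via nearest-point projection onto closed convex sets, applied inside $\aff D$ so that a boundary point (rather than an interior point) can be separated.
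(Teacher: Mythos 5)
The paper does not actually prove this statement: its ``proof'' is a pointer to Rockafellar's Theorem~11.3 and Schneider's Theorem~1.3.8, so your closing remark that the cleanest option is to cite the classical result is exactly what the authors do. Your sketch of a real proof follows the standard textbook route, and both the easy direction and the overall architecture of the hard direction are sound: the $(\Leftarrow)$ argument via the relative-interior characterization $z+\varepsilon(z-y)\in M_1$ is correct, and the reduction to supporting $D=M_1-M_2$ at $0$ (using $\relint(M_1-M_2)=\relint M_1-\relint M_2$, which is itself a nontrivial but standard lemma you should cite or prove) together with the nearest-point projection inside $\aff D$ is the classical argument. The one place where your justification goes wrong is the strictness step: you claim that $D\subset a^\perp$ would force $\relint D=D$ and thereby contradict $0\notin\relint D$; that implication does not hold and is not the relevant contradiction. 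The correct and shorter argument is that $a$ is a \emph{unit} vector lying in $L=\aff D$ (it is a limit of the normalized vectors $q_n-p_n$ with $p_n,q_n\in L$ and $0\in L$, so $L$ is a linear subspace containing all $a_n$), hence $D\subset a^\perp$ would give $L=\aff D\subset a^\perp$ and therefore $\langle a,a\rangle=0$, contradicting $|a|=1$. With that one line substituted, and the edge case $D=\{0\}$ dispatched as you indicate (it would force $M_1=M_2$ to be a common singleton, whose relative interiors intersect), your sketch is a complete proof of the statement the paper merely cites.
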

\begin{proof}
See, e.g.,~\cite[Theorem~11.3]{rR70} or~\cite[Theorem~1.3.8]{SchneiderBook}.
\end{proof}

We are now ready to prove Lemma~\ref{1146}. Let $n':=\dim C$.
Consider an orthonormal basis $e_1,\dots,e_n$ in $\R^n$ such that
\[
e_1,\dots,e_{n'}\in \lin C.
\]
We prove that (a) implies (b). It follows from~(a) that  for some $r>0$ and $t\in \ker A$,
\[
t+\conv(\pm re_1,\dots, \pm re_{n'})\subset C.
\]
Applying $A$ gives
\[
\conv(\pm rAe_{1},\dots, \pm rAe_{n'})\subset AC,
\]
which is equivalent to
\[
\pos(\pm Ae_{1},\dots, \pm Ae_{n'})\subset AC.
\]
Since $e_1,\dots,e_{n'}$  is a basis of $\lin C$, it follows that the linear hull of $Ae_{1},\dots, Ae_{n'}$ is $A \lin C$. Hence,
$$
\pos(\pm Ae_{1},\dots, \pm Ae_{n'}) = \lin(Ae_{1},\dots, Ae_{n'}) = A \lin C = \lin (AC).
$$
Altogether, it follows that $\lin (AC)= AC$, thus (b) holds.

To prove that (b) implies (a), we assume, by contraposition, that $(\relint C)\cap \ker A=\varnothing$. Then it follows from Theorem~\ref{652} that there exists an affine hyperplane $H$ which separates $C$ and $\ker A$ properly. Since $\ker A$ is a linear subspace and $0\in C$, from the definition of  proper separation it readily follows that $\ker A\subset H$. In particular, $H$ is linear, that is, it passes through the origin.

Let $H^+$ be the unique closed subspace whose boundary is $H$ and which contains $C$. Further, let $u$ be the normal vector orthogonal to $H$ and lying in $H^+$. For all $x\in C$ we have
\[
\langle x,u\rangle\geq0.
\]
Moreover, by the definition of  proper separation, $C$ is not a subset of $H$, thus there exists $x_0$ such that
\begin{equation}\label{811}
 x_0\in C\quad\text{and}\quad   \langle x_0,u\rangle>0.
\end{equation}
Since $\image A^\top$ is the orthogonal complement of $\ker A$ and thus contains $u$, there is a vector $v\in\R^k$ such that $A^\top v=u$. Note that $v$ is non-zero. For all $x\in C$,
\[
\langle Ax,v\rangle=\langle x,A^\top v\rangle=\langle x,u\rangle\geq0,
\]
which means that $AC$ is contained in the following half-space:
\[
AC\subset\{y\in\R^k:\langle y,v\rangle\geq0\}.
\]
On the other hand, recalling~\eqref{811} gives
\[
\langle -Ax_0,v\rangle=-\langle x_0,u\rangle<0,
\]
which means that
\[
\lin AC\not\subset\{y\in\R^k:\langle y,v\rangle\geq0\}.
\]
Thus $AC\ne \lin AC$. \hfill $\Box$

\subsection*{Acknowledgement} We are grateful to M.\ Lotz for pointing out Reference~\cite{amelunxen_lotz_walvin}. 

\bibliographystyle{plain}
\bibliography{bib1}

\addtocontents{toc}{\protect\setcounter{tocdepth}{-1}}

\end{document}